\providecommand{\algorithmname}{Algorithm}
  \theoremstyle{plain}
  \newtheorem{lem}{\protect\lemmaname}
  \theoremstyle{remark}
  \newtheorem{rem}{\protect\remarkname}
  \theoremstyle{definition}
  \newtheorem{defn}{\protect\definitionname}
\theoremstyle{plain}
\newtheorem{thm}{\protect\theoremname}
  \theoremstyle{plain}
 \theoremstyle{definition}
    \theoremstyle{plain}
\declaretheoremstyle[
notefont=\bfseries, notebraces={}{},
bodyfont=\normalfont,
postheadspace=0.5em,
numbered=no,
]{mystyle}
\declaretheorem[style=mystyle]{Corollary}
  \providecommand{\definitionname}{Definition}
  \providecommand{\examplename}{Example}
  \providecommand{\lemmaname}{Lemma}
  \providecommand{\remarkname}{Remark}
\providecommand{\corollaryname}{Corollary}
\providecommand{\theoremname}{Theorem}
\begin{document}

\title{Integral geometry for Markov chain Monte Carlo:\\
overcoming the curse of search-subspace dimensionality}
\date{}

\author{Oren Mangoubi and Alan Edelman} 
\affil{\emph{Department of Mathematics} \\ \emph{Computer Science and Artificial Intelligence Laboratory (CSAIL)}}
\affil{\emph{Massachusetts Institute of Technology}}
\maketitle

\let\thefootnote\relax\footnotetext{Keywords: integral and stochastic geometry, Metropolis-within-Gibbs algorithms, manifold MCMC, Chern-Gauss-Bonnet theorem, Cauchy-Crofton formula, random matrices, real algebraic manifold volume bounds}



\begin{abstract}
We introduce a method that uses the Cauchy-Crofton formula and a new curvature formula from integral geometry to reweight the sampling probabilities of Metropolis-within-Gibbs algorithms in order to increase their convergence speed.  We consider algorithms that sample from a probability density conditioned on a manifold $\mathcal{M}$.  Our method exploits the symmetries
of the algorithms' isotropic random search-direction subspaces to analytically average out the variance in the intersection volume caused by the orientation of the search-subspace with respect to the manifold $\mathcal{M}$ it intersects.  This variance can grow exponentially with the dimension of the search-subspace, greatly slowing down the algorithm.  Eliminating this variance allows us to use search-subspaces of dimensions many times greater than would otherwise be possible, 
 allowing us to sample very rare events that a lower-dimensional search-subspace would be unlikely to intersect.

To extend this method to events that are rare for reasons other than their support $\mathcal{M}$ having a lower dimension, we formulate and prove a new theorem in integral geometry that makes use of the curvature form of the Chern-Gauss-Bonnet theorem to reweight sampling probabilities. On the side, we also apply our theorem to obtain new theoretical bounds for the volumes of real algebraic manifolds.


Finally, we demonstrate the computational effectiveness and
speedup of our method by numerically applying it to the conditional
stochastic Airy operator sampling problem in random matrix theory.
\end{abstract}

\section{Introduction\label{sec:Introduction}}

  Applications of sampling on probability distributions, defined on Euclidean space or on other manifolds, arise in many fields, such as Statistics \cite{Manifold_MCMC_Diaconis,MCMC_review, MCMC_MLE}, Machine Learning \cite{MCMC_Application_Machine_Learning}, Statistical Mechanics \cite{MCMC_manifold_application_statistical_mechanics}, General Relativity \cite{MCMC_Application_General_Relativity}, Molecular Biology \cite{MCMC_Application_molecular_biology}, Linguistics \cite{MCMC_Application_Linguistics}, and Genetics \cite{MCMC_Application_genetics}.  One application of special interest to us is random matrix theory, where we would like to compute statistics for the eigenvalues of random matrices under certain eigenvalue constraints.  In many cases these probability distributions are difficult to sample from with straightforward methods such as rejection sampling because the events we are conditioning on are very rare, or the probability density concentrates in some small regions of space.  Typically, the complexity of sampling from these distributions grows exponentially with the dimension of the space.  In such situations, we require alternative sampling methods whose complexity promises not to grow exponentially with dimension.  In Markov chain Monte Carlo (MCMC) algorithms, one of the most commonly used such methods, we run a Markov chain over the manifold that converges to the desired probability distribution \cite{BayesianBook}.  Unfortunately, in many situations MCMC algorithms still suffer from inefficiencies that cause the Markov chain to have very long (oftentimes exponentially long) convergence times \cite{Graphical_Models_Book, Handbook, MCMC_problems2}.

To illustrate these inefficiencies and our proposed fix, we imagine we would like to sample uniformly  
from a  manifold $\tilde{\mathcal{M}}\subset\mathbb{R}^{n+1}$  (as illustrated in dark blue in  Figure 1.)  By uniformly, we can imagine 
that $\tilde{\mathcal{M}}$ has finite volume, and the probability of being picked in a region
is equal to the volume of that region.  More generally, we can put a probability measure on
$\tilde{\mathcal{M}}$ and sample from that measure.

We consider algorithms that produce a sequence of points $\{x_1, x_2, \ldots\}$ (yellow dots in Figure 1) with the property
that $x_{i+1}$ will be chosen somehow in an (isotropically generated) random plane $S$ (red plane in Figure 1)  centered at $x_i$.  Further,
the step from $x_i$ to $x_{i+1}$ is independent of all the previous steps (Markov chain property.)  This situation is known as a Gibbs sampling Markov chain with isotropic random search-subspaces.

For our purposes, we find it helpful to pick a sphere (light blue) of radius $r$ that represents the length of the jump
we might wish to take upon stepping from $x_i$ to $x_{i+1}$.  Note that $r$ is usually random.
The sphere will be the natural setting to mathematically exploit the symmetries associated with isotropically
distributed planes.  Intersecting with the sphere, the plane $\tilde{S}$ becomes a
great circle $S$ (red), and the manifold $\tilde{\mathcal{M}}$ becomes a submanifold  (blue) of the sphere.
Assuming we take a step length of $r$, then necessarily $x_{i+1}$ must be on the
intersection (green dots in Figure 1, higher-dimensional submanifolds in more general situations) of the red great circle and the blue submanifold.

For definitiveness, suppose our ambient space is $\mathbb{R}^{n+1}$ where $n = 2$, our blue manifold $\tilde{\mathcal{M}}$ has codimension $k=1$, and our search-subspaces have dimension $k+1$.  Our sphere now has dimension $n$ and the great circle dimension $k=1$.  The intersections (green dots) of the great circle with $\mathcal{M}$ are $0$-dimensional points.

We now turn to the specifics of how $x_{i+1}$ may be chosen from the intersection of the red curve and the blue curve.  Every green point is on the intersection of the blue manifold and the red circle.  It is worth pondering the distinction between shallower angles of intersection, and steeper angles.  If we thicken the circle by a small constant thickness $\epsilon$, we see that a point with a shallow angle has a larger intersection than a steep angle.  Therefore points with shallow angles should be weighted more.  Figure \ref{fig:Angle} illustrates that ($\frac{1}{\textrm{sin}(\theta_i)}$) is the proper weighting for an intersection angle of $\theta_i$.

 We will argue that the distinction between shallower and steeper angles takes on a false sense of importance
and traditional algorithms may become unnecessarily inefficient accordingly. 
A traditional algorithm focuses on the specific red circle that happens to be generated by the algorithm and then gives more weight
to intersection points with shallower angles.
We propose that knowledge of the isotropic distribution of the red circle indicates that all angles may be given the same weight.  Therefore, any algorithmic work that goes into weighting points unequally based on the angle of intersection is wasted work.

 Specifically, as we will see in Section \ref{sub:Naive-weights-vs.}, $\frac{1}{\textrm{sin}(\theta_{i})}$ has infinite variance, due in part to the fact that $\frac{1}{\textrm{sin}(\theta_{i})}$ can become arbitrarily large for small enough $\theta_i$.  The algorithm must therefore search through a large fraction of the (green) intersection points before converging because any one point could contain a signifiant portion of the conditional probability density, provided that its intersection angle is small enough.  This causes the algorithm to sample the intersection points very slowly in situations where the dimension is large and there are typically exponentially many possible intersection points to sample from.
 
 This paper justifies the validity of the angle-independent approach through the mathematics of integral geometry \cite{Santalo, Stochastic_Geometry_Book, Gelfand, Helgason, Paiva}, and the Cauchy-Crofton formula in particular in Section \ref{sec:The-First-Order}.  We should note that sampling all the intersection points with equal probability cannot work for just any choice of random search-subspace $S$.  For instance, if the search-subspaces are chosen to be random longitudes on the
2-sphere, parts of $\mathcal{M}$ that have a nearly east-west orientation would be sampled frequently but parts of $\mathcal{M}$
that have nearly north-south orientation would be almost never sampled, introducing a statistical bias to the samples in favor of
\noindent the east-west oriented samples.
However, if $S$ is chosen to be isotropically random, the random orientation of $S$ does not favor either the north-south nor the east-west
parts of $\mathcal{M}$, suggesting that we can sample the intersection points with equal probability in this situation without introducing a bias.  Effectively, by sampling with equal probability weights and isotropic search-subspaces we will use integral
geometry to compute an analytical average of the weights, an average that
we would otherwise compute numerically, thereby freeing up computational
resources and speeding up the algorithm.
\begin{figure}[h]
\includegraphics[scale=0.43]{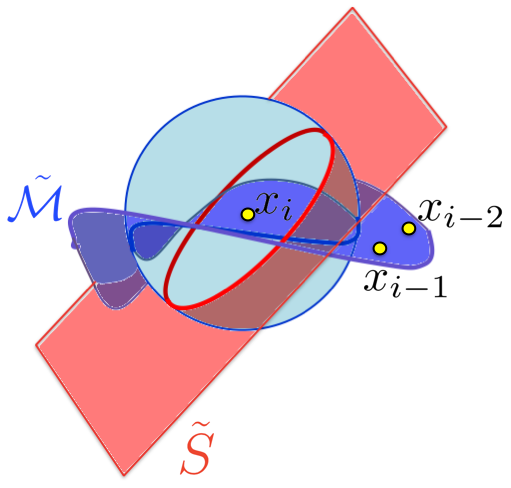}
\includegraphics[scale=0.43]{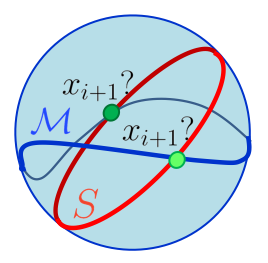}
\caption{In this example we wish to generate random samples on a codimension-$k$ manifold $\tilde{\mathcal{M}}\subset \mathbb{R}^n$ (dark blue) with a Metropolis-within-Gibbs Markov chain $\{x_1,x_2,\ldots\}$ that uses isotropic random search-subspaces $\tilde{S}$ (light red) centered at the most recent point $x_i$ ($k=1$, $n=3$ in figure).  We will consider the sphere $r\mathbb{S}^n$ of an arbitrary radius $r$ centered at $x_i$ (light blue), allowing us to make use of the spherical symmetry in the distribution of the random search-subspace to improve the algorithm's convergence speed.  $\tilde{S}$ now becomes an isotropically distributed random great  $k$-sphere $S=\tilde{S}\cap r\mathbb{S}^n$ (dark red), that intersects a codimension-$k$ submanifold $\mathcal{M}=\tilde{\mathcal{M}}\cap r\mathbb{S}^n$ of the great sphere.\label{fig:Hitandrun}}
\end{figure}

In Part II of this paper, we perform a numerical implementation of an approximate version of the above algorithm in order to sample the eigenvalues of a random matrix conditioned on certain rare events involving other eigenvalues of this matrix.  We obtain different histograms from these samples weighted according to both the traditional weights as well as integral geometry weights  (Figure \ref{fig:intro_plot1}; Figures \ref{fig: 6_eigenvalues} and \ref{fig:large_deviation} in part II).  We find that using integral geometry greatly reduces the variance of the weights.  For instance, the integral geometry weights normalized by the median weight had a sample variance of $3.6\times10^5$, 578, and 1879 times smaller than the traditional weights, respectively, for the top, middle, and bottom simulations of Figure \ref{fig:intro_plot1}.  This reduction in variance allows us to get faster-converging (i.e., smoother for the same number of data points) and more accurate histograms in Figure \ref{fig:intro_plot1}.   In fact, Section \ref{sub:Naive-weights-vs.} shows that the traditional weights have infinite variance due to their second-order heavy tailed probability density, so the sample variance tends to increase greatly as more samples are taken.  Because of the second-order heavy-tailed behavior in the weights, the smoother we desire the histogram to be, the greater the speed up in the convergence time obtained by using the integral geometry weights in place of the traditional weights.

\begin{figure}[h]
\includegraphics[scale=0.19]{Hit_and_run.png}
\includegraphics[trim=1cm 0cm 0cm 0cm, scale=0.45]{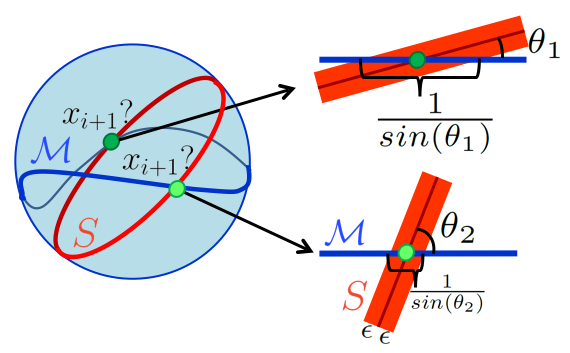}
\caption{Conditional on the next sample point $x_{i+1}$ lying a distance $r$ from $x_i$, the algorithm must randomly choose $x_{i+1}$ from a probability distribution on the intersection points (middle, green) of the manifold $\mathcal{M}$ with the isotropic random great circle $S$ (red).  If traditional Gibbs sampling is used, intersection points with a very small angle of intersection $\theta_{i}$ must be sampled with a much greater (unnormalized) probability $\frac{1}{\textrm{sin}(\theta_{i})}$ (right, top) than intersection points with a large angle (right, bottom).  This greatly increases the variance in the sampling probabilities for different points and slows down the convergence of the method used to generate the next sample $x_{i+1}$.  However, since $S$ is isotropically distributed on $r\mathbb{S}^n$, the symmetry of the isotropic distribution of $S$ allows us to use the Cauchy-Crofton formula from integral geometry to analytically average out these unequal probability weights so that every intersection point now has the same weight, freeing the algorithm from the time-consuming task of effectively computing this average numerically.
\label{fig:Angle}}
\end{figure}

\begin{rem}
 Since we are using an approximate truncated version of the full algorithm that is not completely asymptotically accurate, the integral geometry weights also cause an increase in asymptotic accuracy.  The full MCMC algorithm should have perfect asymptotic accuracy, so we expect this increase in accuracy to become an increase in convergence speed if we allow the Markov chain to mix for a longer amount of time.
 \end{rem}
 
For situations where the intersections are higher-dimensional submanifolds rather than individual points, we show in Section \ref{sec:A-Curvature-Formula} that the angle-independent approach generalizes to a curvature-dependent approach.  We stress that traditional algorithms condition only on the plane that was actually generated while ignoring its isotropic distribution. By taking the isotropy into account, our algorithm can use the curvature information of the manifold to compute an analytical average of the local intersection volumes (local in a second-order sense) with all possible isotropically distributed search-subspaces, greatly reducing the variance of the volumes.

\begin{figure}
\centering
\includegraphics[trim=0cm 2.2cm 0cm 1cm, width=12cm]{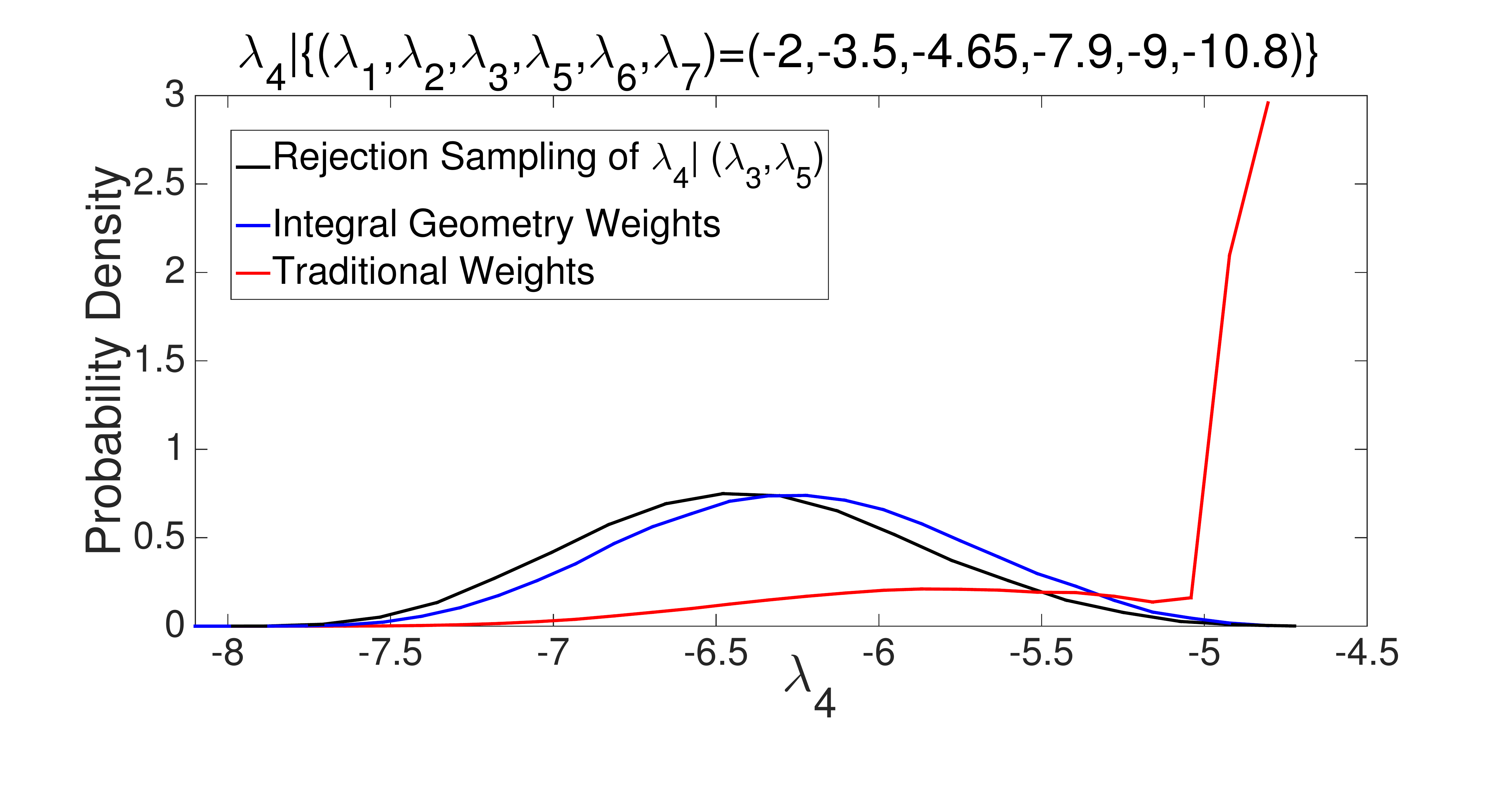}
\includegraphics[trim=0cm 0.3cm 0cm 2cm, clip=true,width=13cm]{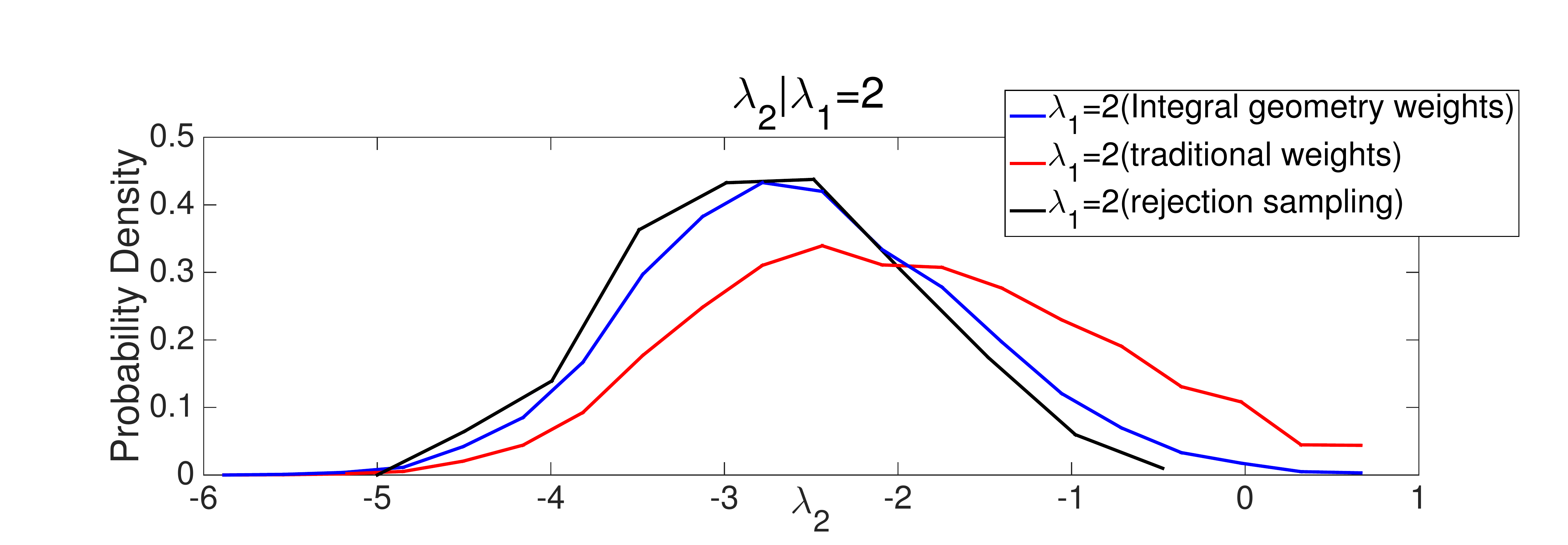}
\includegraphics[trim=0cm 5cm 0cm 5cm, clip=true,width=13cm]{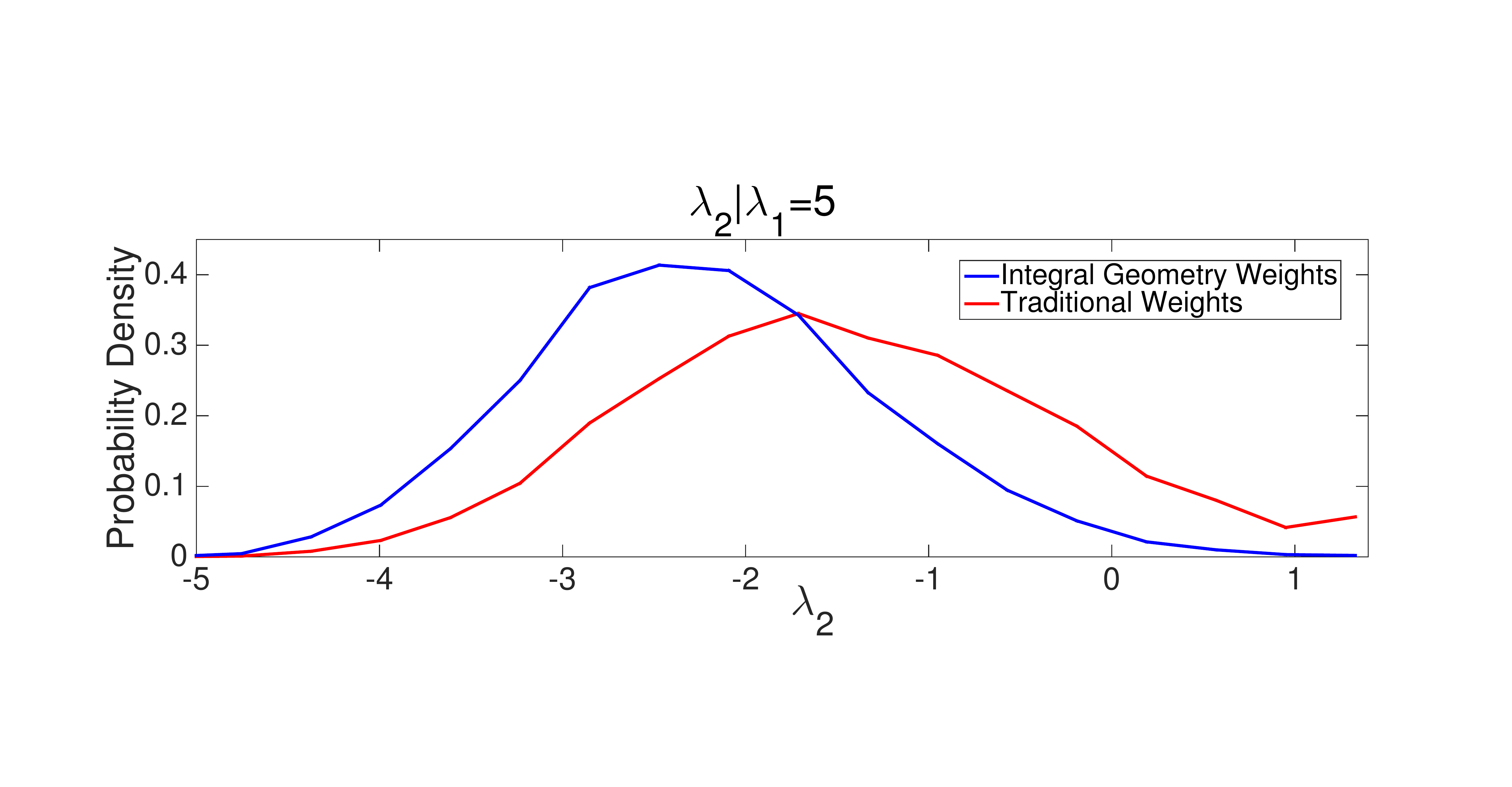}

\caption{Histograms from 3 random matrix simulations (see Sections \ref{sec:Conditioning-on-multiple-eigenvalues} and \ref{sec:Conditioning-on-single-eigenvalue}) where
we seek the distribution of an eigenvalue given conditions on one or more other eigenvalues.
    In all three figures, the blue curve uses the integral geometry weights proposed
in this paper, the red curve uses traditional weights, and the black curve (only in
the top two figures) is obtained by the accurate but very slow rejection sampling method.  
     Two things worth noticing is that the integral geometry weight curve is more accurate than
the traditional weight curve (at least when we have a rejection sampling curve to compare), and that the
integral geometry weight curve is smoother than the traditional weight curve.  
      The integral geometry algorithm achieves these benefits in part because
of the much smaller variance in the weights.  (In these three cases the
integral geometry sample  variance is smaller by a factor of
$\approx$ $10^5$, $600$, and $2000$ respectively)\label{fig:intro_plot1}}
\end{figure}

Higher-dimensional intersections occur in many (perhaps most) situations, such as applications with events that are rare for reasons other than that their associated submanifold has high codimension.  In these situations, the probability of a low-dimensional search-subspace intersecting $\mathcal{M}$ can be very small, so
one may wish to use a search-subspace $S$ of dimension $d$ that is greater
than the codimension $k$ of $\mathcal{M}$ in order to increase the probability
of intersecting $\mathcal{M}$.

As we will see in Section \ref{sub:Sphere-example}, the traditional approach can lead to a huge variance in the intersection volumes that increases exponentially with the difference in dimension $d-k$ (Figure \ref{fig:intro_plot2}, right).   This exponentially large variance leads to the same type of algorithmic slowdowns of the traditional algorithm as the variance in the traditional angle weights discussed above.  Using the curvature-aware approach can oftentimes reduce or eliminate this exponential slowdown.

This paper justifies the validity of the curvature-aware approach by proving a generalization of the Cauchy-Crofton formula (Section \ref{sec:A-Curvature-Formula}).  We then motivate the use of the curvature-aware approach over the traditional curvature-oblivious approach using the mathematics of concentration of measure \cite{Oren_Concentration_of_Measure,Concentration_of_measure_phenomenon, Milman} (Section \ref{sub:Sphere-example}) and differential geometry \cite{SpivakIII,SpivakV}, specifically the Chern-Gauss-Bonnet Theorem \cite{Chern} whose curvature form we use to re-weight the intersection volumes (Section \ref{sub:Optimality-of-Chern-Gauss-Bonnet}).

\begin{figure}
\centering
\includegraphics[scale=0.2]{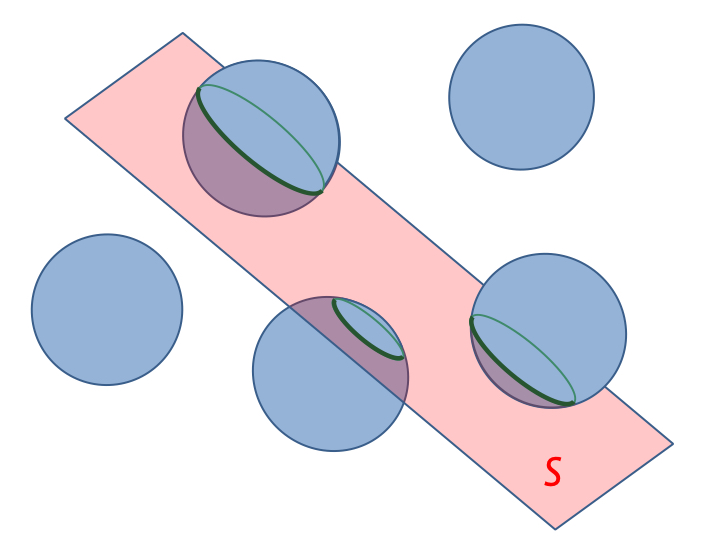}
\includegraphics[trim=3cm 0cm 4cm 0cm, clip=true, scale=0.15]{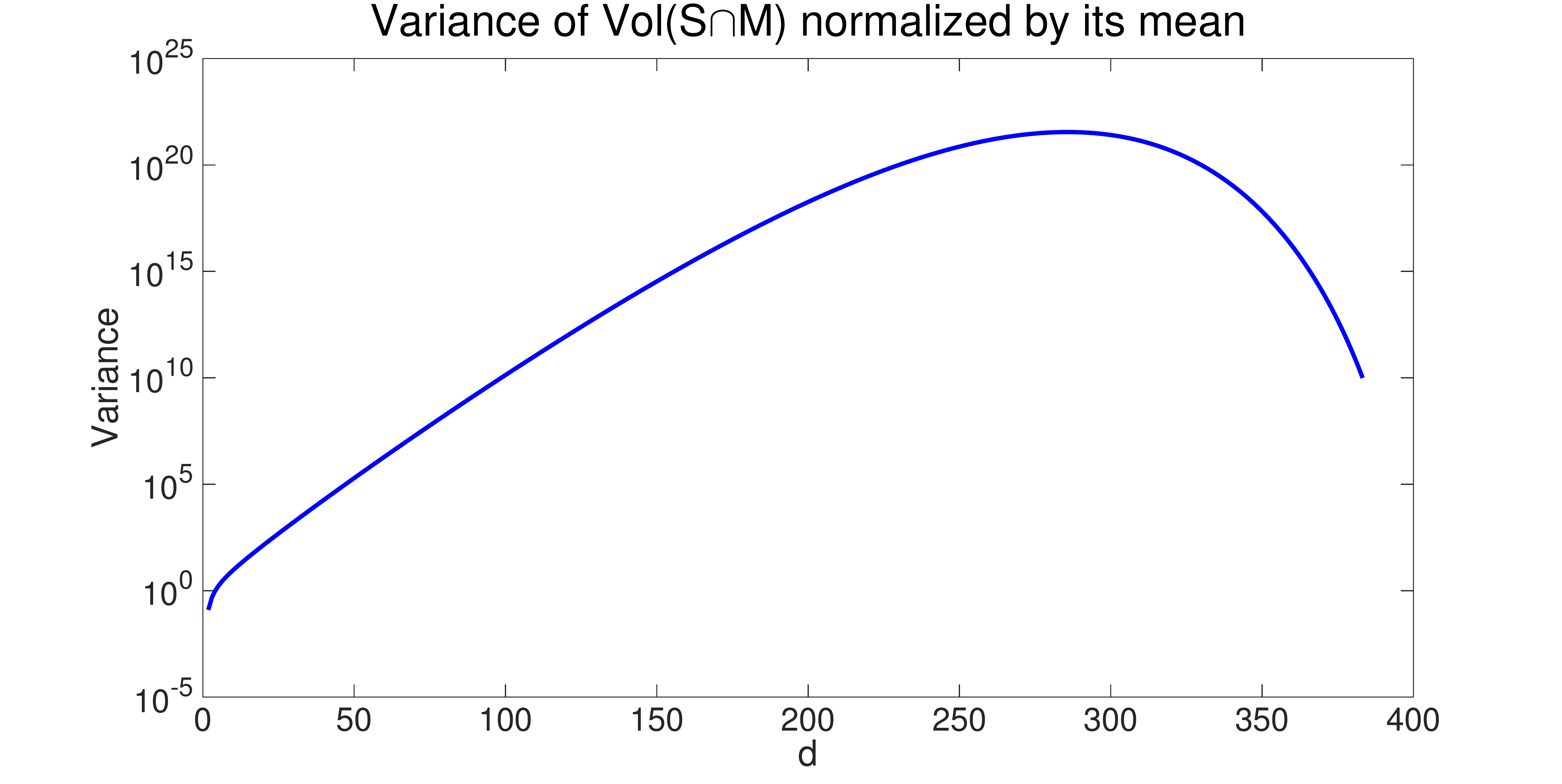}
\caption{In this example a collection $\mathcal{M}$ of $n-1$-dimensional spheres (blue, left) is intersected (intersection depicted as green circles)  by a random search-subspace $S$ (red).  The spheres that $S$ intersects farther form their center will have a much smaller intersection volume than the spheres that $S$ intersects closer to their center, with the variance in the intersection volumes increasing exponentially in the dimension $d$ of $S$ (logarithmic plot, right).   This curse of dimensionality for the intersection volume can lead to an exponential slowdown when using a traditional algorithm to sample from $S \cap \mathcal{M}$.  In Section \ref{sec:A-Curvature-Formula} we will see that this slowdown can be avoided if we use the curvature information to reweight the intersection volumes, reducing the variance in the intersection volumes.  \label{fig:intro_plot2}}
\end{figure}

\part{Theoretical results and discussion}
\section{Integral \& differential geometry preliminaries}

\subsection{Kinematic measure \label{sec:Kinematic_measure}}

Up to this point in the paper we have talked about random search-subspaces informally.  This notion of randomness is formally referred to as the kinematic measure \cite{Santalo, Stochastic_Geometry_Book}.  The kinematic measure provides the right setting to state the Cauchy-Crofton Formula.  The kinematic measure, as the name suggests, is invariant under translations and rotations.

The kinematic measure is the formal way of discussing the following simple situation: we would like to take a random point $p$ uniformly on the unit sphere or, say, inside a cube in $\mathbb{R}^n$.  First we consider the sphere.  After choosing $p$ we then choose an isotropically random plane of dimension $d+1$ through the point $p$ and the center of the sphere.  In the case of the sphere, this is simply an isotropic random plane through the center of the sphere.  On a cube there are some technical issues, but the basic idea of choosing a random point and an isotropic random orientation using that point as the origin persists.  On the cube we would allow any orientation not only those through a "center".  The technical issues relate to the boundary effects of a finite cube or the lack of a concept of a uniform \emph{probability} measure on an infinite space.  In any case the spherical geometry is the natural computational setting because it is compact (If we insist on artificially compactifying $\mathbb{R}^n$ or $\mathbb{H}^n$ by conditioning on a compact subset then either the boundary effects cause the different search-subspaces to vary greatly in volume, slowing the algorithm, or we must restrict ourselves to such a large subset of $\mathbb{R}^n$ or $\mathbb{H}^n$ that most of the search-subspaces don't pass through much of the region of interest).  However, for the sake of completeness we introduce the kinematic measure for all three constant-curvature spaces (spherical, Euclidean, and hyperbolic) because it is relevant in more theoretical applications.

In the spherical geometry case, we define the kinematic measure with respect to a fixed non-random subset $S_\textrm{fixed}\subset \mathbb{S}^n$, usually a great subsphere, by the action of the Haar measure on the special orthogonal group $\mathrm{SO}(n)$ on  $S_\textrm{fixed}$.  When generalizing to Euclidean and hyperbolic geometry, we must be a bit more careful, because there is no uniform probability distribution on $\mathbb{R}^n$ or $\mathbb{H}^n$.  In the case where $S$ has finite $d$-volume, we can circumvent these issues simply by choosing $p$ to be a point in the poisson point process.  To generalize to planes and hyperboloids, we may define the kinematic measure as a poisson-like point process for our search-subspaces with a translationally and rotationally invariant distribution on all of $\mathbb{R}^n$ (or $\mathbb{H}^n$) (the "points" here are the search-subspaces):

\clearpage

\begin{defn}
(Kinematic measure)

Let $\mathbb{K}^n \in \{\mathbb{S}^n, \mathbb{R}^n, \mathbb{H}^n \}$ be a constant-curvature space.  Let $S_\mathrm{fixed}$ be a $d$-dimensional manifold that either has a finite $d$-volume, or is a $plane$ (in $\mathbb{R}^n$ only) or a hyperboloid (in $\mathbb{H}^n$ only).  Let $H$ be the Haar measure on $G$.   If $S$ has finite d-volume we take $G$ to be  the group $I_n$ of isometries of $\mathbb{K}^n$.  If $S$ is a plane or hyperboloid, we instead take G to be the quotient $I_n/I_d$ of the isometries on $\mathbb{K}^n$ with the isometries on $S_\mathrm{fixed}$.    Let $N$ be the counting process such that \\

(i) $\mathbb{E}[N(A)] = \frac{1}{\mathrm{Vol}_d(S_\mathrm{fixed})} \times H(A)$

(ii) $N(A)$ and $N(B)$ are independent\\ \\
for any disjoint Haar-measurable subsets $A,B \subset G$, where we drop the $\frac{1}{\mathrm{Vol}_d(S_\mathrm{fixed})}$ term if $S_\mathrm{fixed}$ is a plane or hyperboloid.  We define the kinematic measure with respect to $S_\textrm{fixed}\subset\mathbb{K}^n$ to be the action of the elements of $N$ on $S_\mathrm{fixed}$.
\end{defn}

If we wish to actually sample from the kinematic measure for the infinite-measure spaces $\mathbb{R}^n$ or $\mathbb{H}^n$ in real life, we must restrict ourselves to some (almost surely) finite subset of the infinite kinematic measure point process. For instance, in this paper we would restrict ourselves to those subspaces that intersect some manifold $\mathcal{M}$ that we would like to sample.

\subsection{The Cauchy-Crofton formula\label{sec:The-Cauchy-Crofton-Formula}}

In this section, we state the Cauchy-Crofton
formula \cite{Crofton, Santalo, Stochastic_Geometry_Book}, which says that the volume of a manifold $\mathcal{M}$ is proportional to the average of the volumes of the intersection $S\cap \mathcal{M}$ of $\mathcal{M}$ with a random kinematic measure-distributed search-subspace $S$.  Our first-order reweigthing (section \ref{sec:The-First-Order}), referred to as the "angle-independent" reweighting in the introduction, is based on this formula.  In Section \ref{sec:A-Curvature-Formula}, we will prove a generalization of this formula that will allow for higher-order reweightings.

\begin{lem}
(Cauchy-Crofton Formula)\label{lem:Crofton}\cite{Crofton, Santalo, Stochastic_Geometry_Book}

Let $\mathcal{M}$ be a codimension-$k$ submanifold of $\mathbb{K}^{n}$, where $\mathbb{K}^{n} \in \{ \mathbb{S}^{n}, \mathbb{R}^{n}, \mathbb{H}^{n} \}$. Let S be a random d-dimensional manifold in $\mathbb{K}^{n}$ of finite volume (or a plane or hyperboloid), distributed according to the Kinematic measure. Then there exists a constant $c_{d,k,n,\mathbb{K}}$ such that

\begin{equation}
\mathrm{Vol}{}_{n-k}(\mathcal{M})=\frac{c_{d,k,n,\mathbb{K}}}{\mathrm{Vol}_{d}(S)}\times\mathbb{E}_{S}[\mathrm{Vol}_{d-k}(S\cap\mathcal{M})],\label{eq:0}
\end{equation}
where we set $\mathrm{Vol}_{d}(S)$ to 1 if  $S$ is a plane or hyperboloid.
In the spherical case we have $c_{d,k,n,\mathbb{S}} = \frac{\mathrm{Vol}(\mathbb{S}^{n-k})\times \mathrm{Vol}(\mathbb{S}^d)}{\mathrm{Vol}(\mathbb{S}^{d-k})}$.
$c_{d,k,n,\mathbb{R}}$ and $c_{d,k,n,\mathbb{H}}$ are given in \cite{Santalo}.

\end{lem}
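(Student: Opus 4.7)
The plan is to establish the formula by first showing that the right-hand side, viewed as a measure on $\mathcal{M}$, is proportional to $(n-k)$-volume, and then computing the constant of proportionality using a convenient test manifold. Concretely, for a Borel subset $A\subseteq\mathcal{M}$ define
\[
\mu(A) \;=\; \frac{1}{\mathrm{Vol}_{d}(S_{\mathrm{fixed}})} \int_{G} \mathrm{Vol}_{d-k}(gS_{\mathrm{fixed}}\cap A)\, dH(g),
\]
interpreted with the convention from the lemma when $S_{\mathrm{fixed}}$ is a plane or hyperboloid. The goal is to show $\mu = c\cdot \mathrm{Vol}_{n-k}|_{\mathcal{M}}$ for a constant $c$ depending only on $(d,k,n,\mathbb{K})$.

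First I would carry out a local/infinitesimal step. Pick $p\in\mathcal{M}$, let $T_p\mathcal{M}$ be its tangent $(n-k)$-plane embedded as an affine submanifold of $\mathbb{K}^n$, and consider a small neighborhood $U\subset\mathcal{M}$ of $p$. Because $\mathcal{M}$ is smooth, $U$ and the corresponding piece of $T_p\mathcal{M}$ have the same $(n-k)$-volume to first order and, for generic $S$, the intersections $S\cap U$ and $S\cap T_p\mathcal{M}$ are isotopic and have the same $(d-k)$-volume to first order. Thus it suffices to prove the identity when $\mathcal{M}$ is itself an $(n-k)$-dimensional totally geodesic submanifold of $\mathbb{K}^n$ (an affine subspace in $\mathbb{R}^n$, a great subsphere in $\mathbb{S}^n$, or a totally geodesic hyperboloid in $\mathbb{H}^n$). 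One can then patch the local statements into the global one by partitioning $\mathcal{M}$ into pieces of vanishing diameter, applying additivity of both $\mu$ and $\mathrm{Vol}_{n-k}$, and taking a limit (Fubini/monotone convergence handles the interchange).

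In the totally geodesic case, invariance of the Haar measure $H$ under left translation by $I_n$ implies that $\mu$ is invariant under the subgroup of $I_n$ preserving $\mathcal{M}$ setwise, and moreover the value $\mu(A)$ depends only on $\mathrm{Vol}_{n-k}(A)$ because any two isometric patches inside the homogeneous submanifold $\mathcal{M}$ can be mapped to each other by an element of $I_n$. By the uniqueness of translation-invariant measures on a homogeneous space, $\mu|_{\mathcal{M}} = c\cdot\mathrm{Vol}_{n-k}|_{\mathcal{M}}$ for a constant $c = c_{d,k,n,\mathbb{K}}$ independent of the particular flat patch chosen. Combined with the local linearization argument above, this yields the formula for arbitrary smooth $\mathcal{M}$.

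Finally, to pin down $c_{d,k,n,\mathbb{S}}$, I would evaluate both sides on the canonical test manifold $\mathcal{M} = \mathbb{S}^{n-k}$ (a great subsphere) with $S_{\mathrm{fixed}} = \mathbb{S}^{d}$ (a great subsphere). Almost surely a random great $d$-sphere intersects a fixed great $(n-k)$-sphere in a great $(d-k)$-sphere, so $\mathbb{E}_S[\mathrm{Vol}_{d-k}(S\cap\mathcal{M})] = \mathrm{Vol}(\mathbb{S}^{d-k})$, and equation \eqref{eq:0} collapses to
\[
\mathrm{Vol}(\mathbb{S}^{n-k}) \;=\; \frac{c_{d,k,n,\mathbb{S}}}{\mathrm{Vol}(\mathbb{S}^{d})}\cdot\mathrm{Vol}(\mathbb{S}^{d-k}),
\]
giving the stated value of $c_{d,k,n,\mathbb{S}}$. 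The Euclidean and hyperbolic constants follow identically once one fixes a normalization of the Haar measure on $I_n/I_d$, and I would simply cite \cite{Santalo} for their explicit form. The main technical obstacle is the local-to-global patching in the non-compact cases $\mathbb{R}^n$ and $\mathbb{H}^n$, where one must justify integrability of the intersection volumes against the (infinite) kinematic measure; restricting to $\mathcal{M}$ of finite $(n-k)$-volume and to subspaces $S$ actually meeting $\mathcal{M}$ makes the relevant integrals finite and the Fubini interchange legitimate.
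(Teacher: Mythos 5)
Your proposal is correct and follows essentially the same route as the paper's own (commented-out) outline: localize to a flat/totally geodesic piece of $\mathcal{M}$, use the invariance of the kinematic measure to conclude that the averaged intersection volume is proportional to $\mathrm{Vol}_{n-k}$, and then determine $c_{d,k,n,\mathbb{S}}$ by taking $\mathcal{M}$ and $S$ to be great subspheres so that the intersection is almost surely a great $(d-k)$-sphere of fixed volume. The only cosmetic difference is that you phrase the proportionality step via uniqueness of invariant measures on the homogeneous submanifold, whereas the paper's sketch reduces to an $(n-k)$-cube and tracks the projection-determinant factor explicitly; both are standard and the lemma is in any case cited to the literature.
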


\subsection{The Chern-Gauss-Bonnet theorem \label{sec:GaussBonnet}}

The Gauss-Bonnet theorem \cite{SpivakIII}, states that the integral of the Gaussian curvature $\mathcal{C}$ of a 2-dimensional manifold $\mathcal{M}$ is proportional to its Euler characteristic $\chi(\mathcal{M})$:

\begin{equation}
\int_\mathcal{M}\mathcal{C}\mathrm{d}A = 2\pi\chi(\mathcal{M}).
\end{equation}

The Chern-Gauss-Bonnet theorem, a generalization of the Gauss-Bonnet theorem to arbitrary even-$m$-dimensional manifolds \cite{Chern,SpivakV}, states that

\begin{equation}
\int_{\mathcal{M}}\textrm{Pf}(\Omega)\mathrm{d}\textrm{Vol}_m = (2\pi)^\frac{m}{2} \chi (\mathcal{M}),
\end{equation}
where $\Omega$ is the curvature form of the Levi-Civita connection and $\textrm{Pf}$ is the pfaffian.  The curvature form $\Omega$ is an intrinsic property of the manifold, i.e., it does not depend on the embedding.  In the special case when $\mathcal{M}$ is a hypersurface, the curvature
$\textrm{Pf}(\Omega_{x})$ may be computed as the Jacobian determinant
of the Gauss map at $x$ \cite{Gauss_Bonnet_for_Hypersurfaces,Gauss_Bonnet_for_Hypersurfaces2},
i.e., as the determinant of the Hessian at $x$ of the manifold when
the orthogonal distance of the manifold to the tangent plane at $x$
is expressed as a function of the tangent space.

The Chern-Gauss-Bonnet theorem is usually viewed as a way of relating the curvature of the manifold with its Euler characteristic.   In Section \ref{sec:A-Curvature-Formula} we will interpret the Chern-Gauss Bonnet theorem as a way of relating the volume form $\mathrm{d}\textrm{Vol}_m$ to the curvature form $\Omega$.  This will come in useful since the curvature form does not change very quickly in sufficiently smooth manifolds, allowing us to get an (in many cases order-of-magnitude) estimate for the volume of the manifold from its curvature form at a single point.

\section{A first-order reweighting via the Cauchy-Crofton formula\label{sec:The-First-Order}}

To simplify the statements of the theorems, we introduce the following
definition:
\begin{defn}
(Unbiased weighting)

We say that the random variable $W$ is an unbiased weighting of a probability measure $\mathbb{P}$ if $\mathbb{P}(A) = \mathbb{E}[W\times \mathbbm{1}_A]$ for every $\mathbb{P}$-measurable set $A$.

\end{defn}
For instance, the weighted mean and the weighted histogram converge
to the same values as the unweighted mean and histogram of $X$ as
the number of samples goes to infinity. The \emph{rate} of convergence,
however, may be very different for the weighted samples than the unweighted
samples.  For example, while the sample means $\frac{1}{n}\sum_{i=1}^n{1+N_i}$ and $\frac{1}{n}\sum_{i=1}^n{1+100\times N_i}$, where $N_1,N_2,... \sim \mathcal{N}(0,1)$ i.i.d., both converge almost surely to $1$ as $n\rightarrow\infty$, $\sum_{i=1}^n{1+100\times N_i}$ converges much slower because the terms have much larger variance.  Our primary goal in this paper is to find weightings that
greatly reduce the variance in the samples and hence greatly increase
the rate of convergence of the estimators.  We now state the main theorem of this section, which uses the Cauchy-Crofton formula to obtain a variance-reducing first-order unbiased weighting of the intersection (Figure \ref{fig:Theorem1}):
\begin{thm}\label{th:1}
Let $\mathbb{Q}$ be the uniform probability measure, with density $f_\mathbb{Q}$, defined on a subset $D\subset\mathbb{K}^{n-1}$ of finite volume.  Let $\lambda:\mathbb{K}^{n}\rightarrow\mathbb{R}^{k}$ be the constraint
function and $a\in\mathbb{R}^{k}$ the constraint value. Let S be
a random search-subspace of dimension $d\geq k$ distributed according to the kinematic measure.
Then the intersection points $x$ of S with the manifold \textup{$\mathcal{M}=\lambda^{-1}(a)$
}can be weighted in an unbiased way with respect to  $f_\mathbb{P}(a)$, the probability density of $\mathbb{P} = \lambda \circ \mathbb{Q}$ at $a$, as

\begin{equation}
w(x)\mathrm{d}\mathrm{Vol}_{d-k}(x)=\frac{c_{d,k,n,\mathbb{K}}}{\mathrm{Vol}_d(S)}\times\frac{f_\mathbb{Q}(x)}{|\nabla(\lambda\circ T)(x)|}\mathrm{d}\mathrm{Vol}{}_{d-k}(x),\label{eq:2}
\end{equation}
where \textup{$\nabla$ }denotes the Jacobian, and\textup{ $|M|:=\sqrt{\textrm{det}(M^{T}M)}$}
denotes the product of the singular values of any matrix $M$.
\end{thm}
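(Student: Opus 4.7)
The plan is to stack two classical identities: the coarea formula, which turns the pushforward density $f_\mathbb{P}(a)$ into a surface integral over $\mathcal{M}$, and the Cauchy--Crofton formula of Lemma~\ref{lem:Crofton}, which turns any surface integral over $\mathcal{M}$ into a kinematic-measure expectation of an integral over $S\cap\mathcal{M}$. Combining the two immediately reads off the claimed weight.

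First, I would apply the coarea (Federer) formula to $\lambda$, working in a local isometric chart $T$ of $\mathbb{K}^n$ so that $\nabla(\lambda\circ T)$ is a well-defined matrix and $|\nabla(\lambda\circ T)|$ is the intrinsic generalized Jacobian (independent of the choice of chart). This yields
\begin{equation}\label{eq:plan-coarea}
f_\mathbb{P}(a)\;=\;\int_{\mathcal{M}}\frac{f_\mathbb{Q}(x)}{|\nabla(\lambda\circ T)(x)|}\,d\mathrm{Vol}_{n-k}(x).
\end{equation}
Second, I would upgrade Lemma~\ref{lem:Crofton} from an identity about volumes to an identity about integrals of nonnegative measurable functions: for any such $g$ on $\mathcal{M}$,
\begin{equation}\label{eq:plan-extended-crofton}
\int_{\mathcal{M}} g(x)\,d\mathrm{Vol}_{n-k}(x)\;=\;\frac{c_{d,k,n,\mathbb{K}}}{\mathrm{Vol}_d(S)}\,\mathbb{E}_S\!\left[\int_{S\cap\mathcal{M}} g(x)\,d\mathrm{Vol}_{d-k}(x)\right].
\end{equation}
Equation~\eqref{eq:plan-extended-crofton} is the standard upgrade of Cauchy--Crofton: apply Lemma~\ref{lem:Crofton} to the infinitesimal pieces $\mathcal{M}\cap U_\alpha$ of a partition on which $g$ is approximately constant, sum, and pass to the limit via monotone convergence (the constant $c_{d,k,n,\mathbb{K}}/\mathrm{Vol}_d(S)$ is the same on every piece because the kinematic measure is translation- and rotation-invariant). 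Inserting $g=f_\mathbb{Q}/|\nabla(\lambda\circ T)|$ into~\eqref{eq:plan-extended-crofton} and comparing with~\eqref{eq:plan-coarea} then gives
\begin{equation}
f_\mathbb{P}(a)\;=\;\mathbb{E}_S\!\left[\int_{S\cap\mathcal{M}} w(x)\,d\mathrm{Vol}_{d-k}(x)\right]
\end{equation}
with $w$ as claimed, which is precisely the unbiased-weighting condition of Definition~2 for the density $f_\mathbb{P}(a)$.

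The step where I expect to spend the most care is the upgrade from Lemma~\ref{lem:Crofton} to~\eqref{eq:plan-extended-crofton}: one must justify the interchange of the $\mathbb{E}_S$ expectation with the limit of the Riemann-type sums approximating $\int_{\mathcal{M}} g\,d\mathrm{Vol}_{n-k}$. Because $\mathbb{Q}$ is a probability measure supported on a set $D$ of finite volume, $f_\mathbb{Q}$ is $L^1$; together with the integrability of $1/|\nabla(\lambda\circ T)|$ that is already built into the coarea formula, monotone/dominated convergence makes this interchange routine in our setting, so the ``obstacle'' is really bookkeeping rather than a substantive difficulty. A smaller but worth-checking point is uniformity of the constant $c_{d,k,n,\mathbb{K}}$ across the spherical, Euclidean, and hyperbolic cases (especially the planar/hyperboloidal kinematic measure, where $\mathrm{Vol}_d(S)$ is set to $1$); this can be pinned down as in the remark following Lemma~\ref{lem:Crofton} by evaluating the identity on a test case such as a linear $\lambda$ on $\mathbb{S}^n$ where every term is computable in closed form.
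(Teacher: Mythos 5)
Your proposal is correct and follows essentially the same route as the paper: the paper likewise first derives the coarea identity $f_\mathbb{P}(a)=\frac{1}{\mathrm{Vol}_{n-1}(D)}\int_{\mathcal{M}}\frac{1}{|\nabla\lambda|}\,d\mathrm{Vol}$ (via the limit of ball probabilities and the change of variables formula) and then extends Cauchy--Crofton to the weighted integral. The only cosmetic difference is in how that extension is carried out: the paper slices $\mathcal{M}$ horizontally by the layer cake lemma into superlevel sets $\mathcal{M}_y=\mathcal{M}\cap\{1/|\nabla\lambda|<y\}$ and applies Lemma~\ref{lem:Crofton} to each layer with a Fubini--Tonelli interchange, whereas you partition the domain into pieces where the integrand is nearly constant and use monotone convergence; both are standard and equivalent.
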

\begin{proof}

We first observe that it suffices to prove the theorem for the special case when $\mathbb{Q}$ is the uniform distribution on $D$.  We can then integrate $f_\mathbb{Q}(x) \times \frac{c_{d,k,n,\mathbb{K}}}{\mathrm{Vol}_{n-1}(D)\times\mathrm{Vol}_d(S)}\times\frac{1}{|\nabla \lambda(x)|}$ over $\mathcal{M}$ to extend the result to arbitrary $\mathbb{Q}$.

Let $g\sim\mathbb{Q}$ be a point uniformly
distributed on $D$. Denoting by $B_\epsilon(a)$ the $k$-ball of radius $\epsilon$ centered at $a\in\mathbb{R}^k$, we have
\[
f_\mathbb{P}(a)=\lim_{\epsilon\downarrow0}\frac{\mathbb{\mathbb{P}}(\lambda(g)\in B_{\epsilon}(a))}{\textrm{Vol}_k(B_{\epsilon}(a))}
\]
\[
=\lim_{\epsilon\downarrow0}\frac{\mathbb{\mathbb{P}}(g\in \lambda^{-1}(B_{\epsilon}(a)))}{\textrm{Vol}{}_{k}(B_{\epsilon}(a))}=\lim_{\epsilon\downarrow0}\frac{\textrm{Vol}{}_{n-1}(g\in \lambda^{-1}(B_{\epsilon}(a)))/\textrm{Vol}_{n-1}(D)}{\textrm{Vol}{}_{k}(B_{\epsilon}(a))}
\]
\begin{equation}
=\frac{1}{\textrm{Vol}_{n-1}(D)}\int_{\lambda^{-1}(a)}\frac{1}{|\nabla \lambda(g)|}\mathrm{d}\textrm{Vol}_{n-k-1}(g),
\end{equation}
where the last equality is obtained from the change of variables formula.
We now use the layer cake lemma from measure theory to layer the manifold
$\mathcal{M}=\lambda^{-1}(a)$ with $\frac{1}{|\nabla \lambda(g)|}d\mathrm{Vol}_{n-k-1}$
layers $\mathcal{M}_{y}:=\mathcal{M}\cap\{\frac{1}{|\nabla \lambda(g)|}<y\}$,
and apply the Cauchy-Crofton formula \cite{Crofton} separately to
each of these layers (as illustrated in Figure \ref{fig:Theorem1}):
\begin{equation}
\begin{split}
&\frac{1}{\textrm{Vol}_{n-1}(D)}\int_{\lambda^{-1}(a)}\frac{1}{|\nabla \lambda(g)|}\mathrm{d}\textrm{Vol}{}_{n-k-1}(g)\\
&=\frac{1}{\textrm{Vol}_{n-1}(D)}\int_{y\geq0}\textrm{Vol}{}_{n-k-1}(\mathcal{M}_{y})\mathrm{d}y\\
&=\frac{1}{\textrm{Vol}_{n-1}(D)}\int_{y\geq0}\frac{c_{d,k,n,\mathbb{K}}}{\textrm{Vol}_\mathrm{d}(S)}\times\mathbb{E}_S[\mathrm{Vol}_{d-k}(S\cap\mathcal{M}_{y})]\mathrm{d}y\\
&=\frac{1}{\textrm{Vol}_{n-1}(D)} \times \frac{c_{d,k,n,\mathbb{K}}}{\textrm{Vol}_d(S)}\times\mathbb{E}_S\bigg[\int_{y\geq0}\textrm{Vol}{}_{d-k}(S\cap\mathcal{M}_{y})\mathrm{d}y\bigg]\\
&=\frac{1}{\textrm{Vol}_{n-1}(D)} \times \frac{c_{d,k,n,\mathbb{K}}}{\textrm{Vol}_d(S)}\times\mathbb{E}_S\bigg[\int_{S\cap\mathcal{M}}\frac{1}{|\nabla \lambda(g)|}\mathrm{d}\textrm{Vol}{}_{d-k}\bigg],
\end{split}
\end{equation}
where the expectation $\mathbb{E}_S$ is taken with respect to Kinematic measure on $S$, and $c_{d,k,n,\mathbb{K}}$
is the constant from the Cauchy-Crofton formula.  The exchange of the integral and the expectation holds by the Fubini-Tonelli theorem, since the integrand is nonegative.  Hence, $w(x)=\frac{c_{d,k,n,\mathbb{K}}}{\textrm{Vol}_d(S)}\times\frac{1/\textrm{Vol}_{n-1}(D)}{|\nabla \lambda(x)|}\mathrm{d}\textrm{Vol}{}_{d-k}(x)$
is an unbiased reweighting with respect to $f_\mathbb{P}(a)$.
\end{proof}

\begin{figure}[h]
\centering
\includegraphics[scale=0.4]{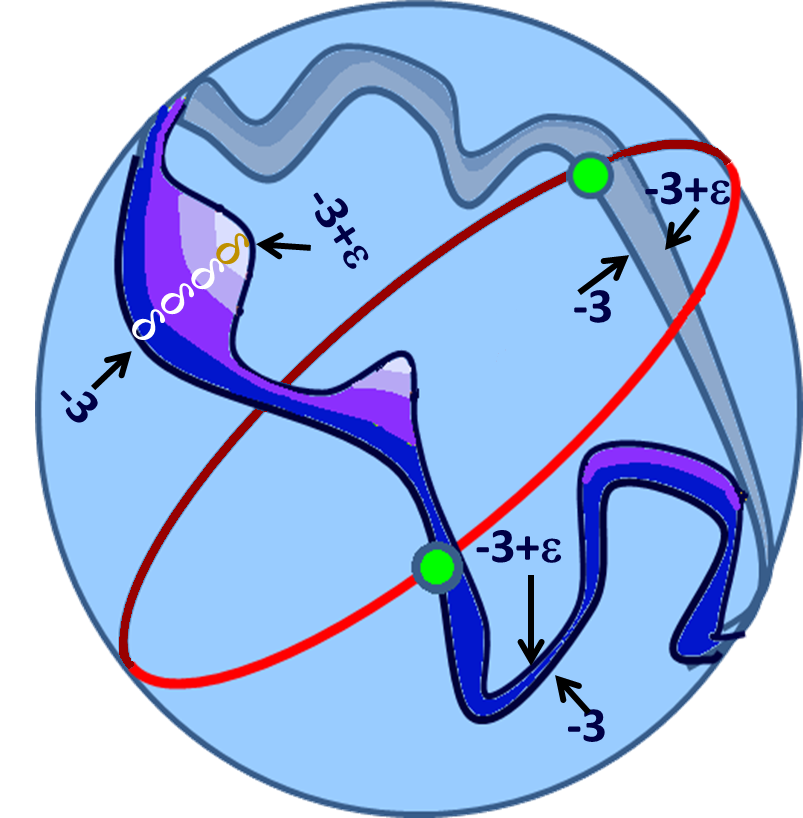}
\caption{The random great circle (red) intersects the constraint manifold (the
blue ribbon which represents the level set $\{g:\lambda_{1}=3\}$
in this example) at different points, generating samples (green dots).
The constraint manifold has different (differential) thickness at
different points, given by $\frac{1}{|\nabla \lambda(g)|}$.
Theorem 1 says that instead of weighting the green dots by the (differential)
intersection length of the great circle and the constraint manifold
at the green dot, we can instead weight it by the local differential
thickness, greatly reducing the variation in the weights (see Sections
\ref{sub:Naive-weights-vs.}, \ref{sec:Conditioning-on-multiple-eigenvalues}
and \ref{sec:Conditioning-on-single-eigenvalue}). Decomposing this
thickness into layers of manifolds, each of uniform thickness, by
means of the Layer Cake Lemma from measure theory, allows us to apply
the Cauchy-Crofton formula individually to each manifold in the proof
of Theorem \ref{th:1} .\label{fig:Theorem1}}
\end{figure}

\subsection{The first-order reweighted algorithm\label{sec:The-reweighted-algorithm}}

As discussed in the introduction, we can apply the first-order reweighting of Theorem 1 to the Metropolis-within-Gibbs algorithm with $d$-dimensional isotropic random search-subspaces to get a more efficient MCMC algorithm (Algorithm \ref{alg:Hit-and-Run}):

\begin{algorithm}[H]
\caption{Integral Geometry reweighted Metropolis-within-Gibbs MCMC\label{alg:Hit-and-Run}}

\begin{enumerate}
\item \textbf{Input:} Oracle for Probability density $f:\mathbb{R}^{n}\rightarrow[0,1]$,
Oracle for Constraint function $\lambda:\mathbb{R}^{n}\rightarrow\mathbb{R}^{k}$,
$n\geq k\geq0$, (we condition on $\mathcal{M}=\{\lambda(x)=c\}$)

\item  \textbf{Input:} An oracle for the Jacobian $\nabla\lambda$
\item \textbf{Input:} Oracle for observed statistic $\psi:\mathbb{R}^{n}\rightarrow\mathbb{R}^{s}$
\item \textbf{Input:} Search-subspace dimension $d\geq k$, Starting point
$x_{0}$, probability density of $\rho(r)$ of for the step distance $r$, number of Gibbs sampling iterations $i_{\textrm{max}}$
\item For $i=1$ to $i_{\textrm{max}}$

\begin{enumerate}
\item Generate a random isotropic $d$-dimensional linear search-subspace
$S_{i+1}$ centered at $x_{i}$ (this can be easily
done using spherical Gaussians and the QR \cite{Trefethen_book} decomposition)
\item Use an MCMC method (usually heavily based on a nonlinear solver, as in \cite{Optimization_in_MCMC}) to
sample a point $x_{i+1}$ from the (unnormalized) probability density
\begin{equation}
w(x)=\frac{f(x)\times \rho(||x-x_i||)}{|\nabla\lambda|_{\mathcal{S}_x}(x)|}\mathrm{d}\textrm{Vol}_{d-k}
\end{equation}
supported on $S_{i+1}\cap\mathcal{M}$, where $\nabla\lambda|_{\mathcal{S}_x}$ is the gradient of the restriction of $\lambda$ to the sphere $\mathcal{S}_{x}$ of radius $||x-x_{i}||$ centered at $x_{i}$. (If $\mathcal{M}$ is full-dimensional then$\frac{1}{|\nabla\lambda|_{\mathcal{S}}(x)|}$ is set to 1)

(Note: This is
the "Metropolis" step in the traditional Metropolis-within-Gibbs
algorithm, but reweighted according to Theorem \ref{th:1} restricted to the sphere $\mathcal{S}_{x}$)
\item compute $\psi(x_{i})$
\end{enumerate}
\item \textbf{Output: }Unweighted samples $\{x_{i}\}_{i=1}^{i_{\textrm{max}}}$
asymptotically distributed as $i\rightarrow\infty$ according to
the conditional density $f|\{\lambda(x)=c\}$, and $\psi(x_{1}),\psi(x_{2}),...,\psi(x_{i_{\textrm{max}}})$ (from which we can compute
statistics of $\psi$, such as the mean, variance, or the histogram
of $\psi$) \end{enumerate}
\end{algorithm}

In many cases, we can take the probability density $f$ to be spherical
Gaussian (for instance, $\lambda$ and $\psi$ can be functions of
a random matrix whose entries are functions of iid Gaussians $x=(x_{1},...,x_{n}))$.
In this situation, we only need to perform one search-subspace iteration
to obtain a sample from the correct distribution (Algorithm \ref{alg:Great-Sphere-Metropolis}):

\begin{algorithm}[H]
\caption{Integral Geometry reweighted independent search-subspace Metropolis-within-Gibbs
MCMC for sampling from functions of Gaussians\label{alg:Great-Sphere-Metropolis}}

Goal: We wish to condition on $\mathcal{M}=\{\lambda(x)=c\}$, where
the probability distribution on $\mathbb{R}^{n}$ is $f(x) = \frac{1}{\sqrt{(2\pi)^{n}}}e^{-\frac{1}{2}x^{T}x}$,
the density of iid standard normals.
\begin{enumerate}
\item \textbf{Input:} Oracle for Constraint function $\lambda:\mathbb{R}^{n}\rightarrow\mathbb{R}^{k}$,
$n\geq k\geq0$
\item  \textbf{Input:} Oracle for the Jacobian $\nabla\lambda$
\item \textbf{Input:} Oracle for observed statistic $\psi:\mathbb{R}^{n}\rightarrow\mathbb{R}^{s}$
\item \textbf{Input:} Search-subspace dimension $d\geq k$.  Number of iterations $i_\mathrm{max}$.
\item for $i=1$ to $i_{\textrm{max}}$

\begin{enumerate}
\item Generate a random isotropic $d$-dimensional linear search-subspace $S_{i}$ centered at the origin.

\item Use an MCMC method (usually heavily based on a nonlinear solver, as in \cite{Optimization_in_MCMC})
to sample a point $x_{i}$ from the (unnormalized) probability density $w(x)=\frac{f(x)\times \rho_{\chi_n}(||x||)}{|\nabla\lambda|_{\mathcal{S}_x}(x)|}$
supported on $S_{i+1}\cap\mathcal{M}$. $\rho_{\chi_n}$ is the density of the $\chi_n$ distribution and $\nabla\lambda|_{\mathcal{S}_x}$ is the gradient of the restriction of $\lambda$ to the sphere $\mathcal{S}_{x}$ of radius $r = ||x||$ centered at the origin.  (If $\mathcal{M}$ is full-dimensional then$\frac{1}{|\nabla\lambda|_{\mathcal{S}_x}(x)|}$
is set to 1.)

\end{enumerate}

\item \textbf{Output: }Unweighted samples $\{x_{i}\}_{i=1}^{i_{\textrm{max}}}$
that are independent and correctly distributed even for finite $i$
according to the conditional density $f|\{\lambda(x)=c\}$, from which we can obtain $\psi(x_{1}),\psi(x_{2}),...,\psi(x_{i_{\textrm{max}}})$
(and compute statistics of $\psi$, such as the mean, variance, or histogram of $\psi$)\end{enumerate}
\end{algorithm}

\subsection{Traditional weights vs. integral geometry weights\label{sub:Naive-weights-vs.}}
In this section we find the theoretical distribution of the traditional weights and compare them to the integral geometry weights of Theorem \ref{th:1}. We will see that while the traditional weights have an infinite variance, greatly slowing the MCMC algorithm, the integral geometry weights vary only with the differential thickness of the level set $\mathcal{M} = \{x: \lambda(x) = a\}$.

In the codimension-$k=1$ case, we can find the distribution of the weights by observing that the symmetry of the Haar measure means that the distribution of the weights are a local property that does not depend on the choice of manifold $\mathcal{M}$.  Moreover, since the Kinematic measure is locally the same for all three constant curvature spaces $\mathbb{S}^n$, $\mathbb{R}^n$, and $\mathbb{H}^n$, the distribution is the same regardless of the choice of constant curvature space.  Hence, without loss of generality, we may choose $\mathcal{M}$ to be the unit circle in $\mathbb{R}^2$.  Because of the rotational symmetry of both the kinematic measure and the circle, without loss of generality we may condition on only the vertical lines $\{(x,t) : t\in\mathbb{R}\}$, in which case $x$ is distributed uniformly on $[-1,1]$.  The weights are then given by $w = w(x) = \sqrt{1+\frac{x^2}{1-x^2}}$, with exactly two intersections at almost every $x$.  Hence, $\mathbb{E}[w]= 2\int_{-1}^1{\sqrt{1+\frac{x^2}{1-x^2}}}\mathrm{d}x = 2\pi$, the circumference of the circle, as expected.  However, $\mathbb{E}[w^2]= 2\int_{-1}^1{1+\frac{x^2}{1-x^2}}\mathrm{d}x =\infty$.  Hence, the weights $w$ have infinite variance, greatly slowing the convergence of the sampling algorithm even in the codimension-$k=1$ case!  On the other hand, the integral geometry weights, being identically $=1$ have variance zero, so the weights do not slow down the convergence at all.  (A related computation, which we do not give here, shows that the theoretical weights for general $k$ are given by the Wishart matrix determinant $|\frac{1}{\textrm{det}(G^{T}G)}|$, where $G$ is a $(k+1)\times k$ matrix of iid standard normals, which also has infinite variance.)

In practice, nonlinear solvers do not find the different intersection points uniformly at random, so different points can have a different distribution of weights, introducing an inaccuracy in the estimator that uses our samples.  As we saw in Figure \ref{fig:intro_plot1}, the inaccuracy (as well as the variance) is much greater when using the traditional weights than when using the integral geometry weights.  This inaccuracy should ideally be corrected by randomizing the solver by turning it into a Markov chain.  The greater the randomization needed, the more the solver behaves like a random walk and less like a solver, slowing the convergence \cite{Handbook}. Since the samples paired with the traditional weights have much greater inaccuracies that need to be corrected, a Markov chain using the traditional weights will require greater randomization of the nonlinear solver (in addition to having a much greater variance in the weights), and hence should converge much more slowly than a Markov chain using the traditional weights.

\FloatBarrier

\section{A second-order reweighting via the Chern-Gauss-Bonnet theorem \label{sec:A-Curvature-Formula}}

Oftentimes, it is necessary to use a random great sphere of dimension
$d$ larger than the codimension $k$ of the constraint manifold.
For instance, the manifold might represent a rare event, so we might
use a higher dimension than the codimension to increase the probability
of finding an intersection with the manifold. However, the intersections
will no longer be points but submanifolds of dimension $d-k$. How
should one assign weights to the points on this submanifold? The first-order
factor in this weight is simple: it is the same as the Jacobian weight
of Equation \ref{eq:2}. However, the size of the intersection still
depends on the orientation of the great sphere with respect
to the constraint manifold. For instance, we will see in Section \ref{sub:Sphere-example} that if we intersect a sphere
with a plane near its center, then we will get a much larger intersection
than if we intersect the sphere with a plane far from its center.

This example suggests that we should weight the points on the intersection
using the local curvature form, which is described by the second derivatives
of the function whose level set is the constraint manifold: If we
intersect in a direction where the second derivative is greater (with
the plane not passing near the center in the example) then we should
use a larger weight than in directions where the second derivative is smaller
(when the plane passes near the center) (Figure \ref{fig:GaussBonnet}).

\begin{figure}
\centering
\includegraphics[trim=0cm 1.5cm 0cm 1.2cm, clip=true, scale=0.23]{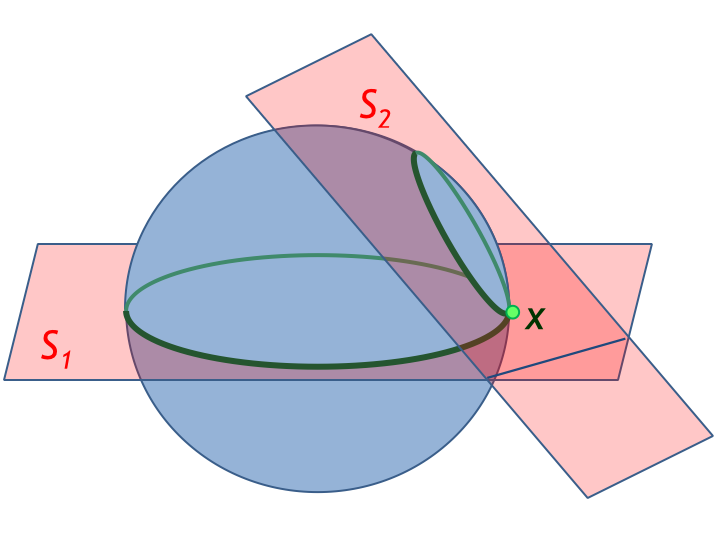}
\caption{Both d-dimensional slices, $S_{1}$ and $S_{2}$, pass through the
green point $x$, but the slice passing through the center of the
n-1 sphere $\mathcal{M}$ has a much bigger intersection volume than
the slice passing far from the center. The smaller slice also has
larger curvature at any given point $x$. If we reweight the density
of $S_{i}\cap\mathcal{M}$ at $x$ by the Chern-Gauss-Bonnet curvature
of $S_{i}\cap\mathcal{M}$ at $x$, then both slices will have exactly
the same total reweighted volume (exact in this case since the sphere
has constant curvature form), since the Chern-Gauss-Bonnet theorem relates
this curvature to the volume measure. \label{fig:GaussBonnet}}
\end{figure}

Consider the simple case where $\mathcal{M}$ is a collection of spheres.  If we were just applying an algorithm based on Theorem \ref{th:1}, such as Algorithm \ref{alg:Hit-and-Run}, we would sample uniformly from the volume on the intersection $S \cap \mathcal{M}$ (Step 6 in Algorithm \ref{alg:Hit-and-Run}). However, the intersected volume depends heavily on the orientation of the search-subspace $S$ with respect to each intersected sphere (Figure \ref{fig:ConcentrationOfMeasure_Euclidean}), meaning that the algorithm will in practice have to search through exponentially many spheres before converging to the uniform distribution on $S \cap \mathcal{M}$  (See section \ref{sub:Sphere-example}).  To avoid this problem, we would like to sample from a density $\hat{w}$ that is proportional to the absolute value of the Chern-Gauss-Bonnet curvature of $S\cap \mathcal{M}$ at each point $x$ in the intersection: $\hat{w} = \hat{w}(x; S) = |\textrm{Pf}(\Omega_x(S\cap\mathcal{M}))|$ (The motivation for using the Chern-Gauss-Bonnet curvature $\textrm{Pf}(\Omega_x(S\cap\mathcal{M}))$ will be discussed in Section \ref{sub:Motivation_for_curvature_reweighting}).

  However, sampling from the density $\hat{w}(x; S)$ does not in general produce unbiased samples uniformly distributed on $\mathcal{M}$ even when $S$ is chosen at random according to the kinematic measure.  We will see in Theorem \ref{th:2} that in order to guarantee an unbiased uniform sampling of $\mathcal{M}$ we can instead sample from the normalized curvature density
  
  \begin{equation}
  w(x; S) =  \frac{\mathrm{Vol}_d(S)}{c_{d,k,n,\mathbb{K}}} \times \frac{\hat{w}(x; S)}{\mathbb{E}_Q\big[\hat{w}(x; S_Q) \times \det(\mathrm{Proj}_{\mathcal{M}_x^\perp} Q)\big]}.
  \end{equation} 
The normalization term $\mathbb{E}_Q\big[\hat{w}(x; S_Q) \times \det(\mathrm{Proj}_{\mathcal{M}_x^\perp} Q)\big]$ is the average curvature at $x$ over all the random orientations at which $S$ \emph{could have} passed through $x$.  Here $S_Q = Q(S-x)+x$ is a random isotropically distributed rotation of $S$ about $x$, with $Q$ the corresponding isotropic random orthogonal matrix.  The determinant inside the expectation is there because while $S$ is originally isotropically distributed, the conditioning of $S$ to intersect $\mathcal{M}$ (at $x$) modifies the probability density of its orientation by a factor of $\det(\mathrm{Proj}_{\mathcal{M}_x^\perp} Q)$.  $P_{\mathcal{M}_x^\perp} Q$ is the projection of the orthogonal complement of the tangent space of $\mathcal{M}$ at x.  In this collection of spheres example, the denominator is a constant for each sphere of a radius $R$.  For instance, in the Euclidean case it can be computed analytically, using the Gauss-Bonnet theorem, as
  \[
  (2\pi)^{\frac{d-1}{2}}2\frac{\Gamma(\frac{d}{2}+1)}{\pi^\frac{d}{2}(n-d)} \times \frac{\Gamma(-\frac{d-1}{2}+1)\Gamma(\frac{n-d}{2}+1)}{(n-d)\Gamma(-\frac{d-1}{2}+\frac{n-d}{2}+1)} R^d.
  \]
From this fact, together with the fact that the total curvature is always the same for any intersection by the Chern-Gauss-Bonnet theorem, we see that when sampling under the probability density $w$ the probability that we will sample from any given sphere is always the same regardless of the volume of the intersection of $S$ with that sphere.  Since each sphere (of the same radius) has an equal probability of being sampled, when sampling from $\mathcal{M}$ the algorithm has to search for far fewer spheres before converging to a uniformly random point on $S \cap \mathcal{M}$ than when sampling from the uniform distribution on $S \cap \mathcal{M}$.


The need to guarantee that $w$ will still allow us to sample uniformly without bias from $\mathcal{M}$ motivates introducing the following theorem (Theorem \ref{th:2}), which, as far as
we know, is new to the literature.  Since the proof does not rely on the fact that $w$ is derived from a curvature form, we state the theorem in a more general form that allows for arbitrary $\hat{w}$ (see Sections \ref{sub:Higher-Order-reweighting} and \ref{sub:Atiyah-Singer_index_theorem} for higher-order choices for $\hat{w}$ beyond just the Chern-Gauss-Bonnet curvature).

\begin{thm} \label{th:2} (Generalized Cauchy-Crofton formula)\\
Let $\mathcal{M}$ be a codimension-$k$ submanifold of $\mathbb{K}^n$ with curvature uniformly bounded above, where $\mathbb{K}^n \in \{\mathbb{S}^n,\mathbb{R}^n,\mathbb{H}^n\}$. Let $S$ be a finite-volume (in $\mathbb{S}^n$, $\mathbb{R}^n$, or $\mathbb{H}^n$), or planar (in $\mathbb{R}^n$), or hyperboloidal  (in $\mathbb{H}^n$) random d-dimensional search-subspace with uniformly bounded curvature distributed according to the kinematic measure. Then the intersection of $S$ with $\mathcal{M}$ can be weighted in an unbiased manner with respect to the volume-measure on $\mathcal{M}$ as
\begin{multline}\label{eq:Th2}
w (x; S)\, \mathrm{d}\mathrm{Vol}{}_{d-k} = \frac{\mathrm{Vol}_d(S)}{c_{d,k,n,\mathbb{K}}} \times \frac{\hat{w}(x; S)}{\mathbb{E}_Q\big[\hat{w}(x; S_Q) \times \det(\mathrm{Proj}_{\mathcal{M}_x^\perp} Q)\big]}\mathrm{d}\mathrm{Vol}{}_{d-k},
\end{multline}
where the pre-normalized weight $\hat{w}(x; S)$ is any function such that $a<\hat{w}(x; S)<b$ for some $0<a<b$, and is Lipschitz in the variable $x \in \mathcal{M}$ for some Lipschitz constant $0<c\neq\infty$ (when using a \underline{translation} of $S$ to keep $x$ in $S \cap \mathcal{M}$ when we vary $x$).

Q is a matrix formed by the first $d$ columns of a random matrix sampled from the Haar measure on $\mathrm{SO}(n)$.  $S_Q = Q(S-x)+x$, where $R_{\perp}$ is a rotation matrix rotating $S-x$ so that it is orthogonal to the tangent space of $\mathcal{M}$. $\mathrm{Proj}_{\mathcal{M}_x^\perp}$ is the projection onto the orthogonal complement of the tangent space of $\mathcal{M}$ at $x$.
 
 (As in Lemma \ref{lem:Crofton}, if $S$ is a plane or hyperboloid, we set $\mathrm{Vol}_d(S)$ to 1.)
\end{thm}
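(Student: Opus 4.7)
The plan is to check the unbiasedness condition
$$\mathbb{E}_S\!\left[\int_{S\cap A} w(x;S)\,\mathrm{d}\mathrm{Vol}_{d-k}(x)\right] = \mathrm{Vol}_{n-k}(A) \qquad \text{for every measurable } A\subset\mathcal{M}.$$
The strategy breaks into three steps: (i) a disintegration of the kinematic measure on $S$ along $\mathcal{M}$ that produces a Jacobian equal to $\det(\mathrm{Proj}_{\mathcal{M}_x^\perp}Q)$; (ii) substitution of the explicit formula for $w(x;S)$ so that the orientation-average appearing in the numerator cancels against the denominator; and (iii) identification of the leftover overall normalization by specializing to $\hat{w}\equiv 1$ and comparing with the classical Cauchy-Crofton formula (Lemma \ref{lem:Crofton}).

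Step (i) is the key one. Conditioning the kinematic measure of $S$ on passing through a fixed point $x\in\mathcal{M}$ leaves the orientation of $S-x$ isotropic, so $S$ can be reparameterized as $S_Q$ with $Q$ an $n\times d$ matrix of orthonormal columns drawn from the Haar measure. The change of variables from the kinematic measure on $S$ to the product measure on (basepoint $x\in\mathcal{M}$, orientation $Q$) picks up the Jacobian $\det(\mathrm{Proj}_{\mathcal{M}_x^\perp}Q)$, which is the higher-codimension analogue of the $\sin\theta$ weight depicted in Figure \ref{fig:Angle}; this is the same change-of-variables computation that underlies the classical proof of Lemma \ref{lem:Crofton}, now carried out with an integrand depending nontrivially on $S$. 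Fubini-Tonelli applies because $\hat{w}$ is bounded between positive constants and the curvatures of $\mathcal{M}$ and $S$ are uniformly bounded; together these hypotheses yield the identity
$$\mathbb{E}_S\!\left[\int_{S\cap A} g(x,S)\,\mathrm{d}\mathrm{Vol}_{d-k}(x)\right] = K\int_A \mathbb{E}_Q\!\left[g(x,S_Q)\,\det(\mathrm{Proj}_{\mathcal{M}_x^\perp}Q)\right]\mathrm{d}\mathrm{Vol}_{n-k}(x)$$
for every bounded measurable $g(x,S)$, with a universal normalization $K = K(d,k,n,\mathbb{K})$ coming from the kinematic measure.

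Steps (ii) and (iii) are then algebraic. Substituting $g = w$ from the theorem, the product $\hat{w}(x;S_Q)\det(\mathrm{Proj}_{\mathcal{M}_x^\perp}Q)$ inside $\mathbb{E}_Q$ is exactly what is being averaged in the denominator of $w$, so the $Q$-integral of the numerator cancels the denominator and the inner expectation collapses to the $x$-independent constant $\mathrm{Vol}_d(S)/c_{d,k,n,\mathbb{K}}$; the whole right-hand side reduces to $K\cdot\mathrm{Vol}_d(S)/c_{d,k,n,\mathbb{K}}\cdot\mathrm{Vol}_{n-k}(A)$. To pin down the product $K\cdot\mathrm{Vol}_d(S)/c_{d,k,n,\mathbb{K}}$, I specialize the disintegration identity to $g\equiv 1$ and compare with Lemma \ref{lem:Crofton} applied to that same integrand; the two must be consistent, which forces that product to equal $1$. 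The unbiasedness identity for general $\hat{w}$ then drops out immediately.

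The main obstacle will be Step (i): making the disintegration and the identification of its Jacobian rigorous uniformly in $x\in\mathcal{M}$ in each of the three constant-curvature geometries --- especially in the noncompact cases where $S$ is a plane in $\mathbb{R}^n$ or a hyperboloid in $\mathbb{H}^n$ and the kinematic measure is infinite. The uniform curvature bounds on $\mathcal{M}$ and $S$ are used to guarantee that $\det(\mathrm{Proj}_{\mathcal{M}_x^\perp}Q)$ remains nondegenerate on a uniform neighborhood of each intersection point and that local normal-bundle coordinates around $\mathcal{M}$ patch together into a global measurable parameterization, while the Lipschitz regularity and the two-sided bounds on $\hat{w}$ supply the dominating function required for Fubini-Tonelli and for passing to pointwise densities. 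Once those analytic issues are dispatched, Steps (ii) and (iii) are essentially bookkeeping against the already-established Cauchy-Crofton formula.
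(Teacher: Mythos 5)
Your outline rests on exactly the identity that drives the paper's proof: the factor $\det(\mathrm{Proj}_{\mathcal{M}_x^\perp} Q)$ is the Jacobian that converts an integral over kinematic $S$ into an integral over $\mathcal{M}$ (equivalently, the density of the orientation of $S$ conditioned on passing through $x$), after which the orientation average in the denominator of $w$ cancels and the Cauchy--Crofton formula supplies the normalization. Where you differ is in how the hard analytic step is organized. You posit one global disintegration identity for arbitrary bounded $g(x,S)$ and defer its proof to a normal-bundle change of variables; the paper instead (a) reduces to the Euclidean, planar, orientation-only case by cutting $S$ into small geodesic cubes, (b) proves the unbiasedness identity for a single small codimension-$k$ ball tangent to $\mathcal{M}$ at $x$ --- where the symmetry of the ball makes the position and orientation of $S$ independent even after conditioning on intersection, and where the identity
$\int_{\mathbb{R}^{n-d}} \mathrm{Vol}\bigl((T_Q + R_{Q^\perp}y)\cap B_\epsilon\bigr)\,\mathrm{d}y = \det(\mathrm{Proj}_{B_\epsilon^\perp}Q)\,\mathrm{Vol}(B_\epsilon)$
is precisely your Jacobian computation --- and then (c) recovers the global statement by tiling $\mathcal{M}$ with a Poisson process of such balls and passing to the limit. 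It is in step (c) that the Lipschitz and two-sided boundedness hypotheses on $\hat{w}$ are actually consumed (a $\delta$-approximation of $w$ by locally constant weights plus dominated and monotone convergence), exactly where you predict they would be needed. Your route is shorter if the disintegration lemma (an affine Blaschke--Petkantschin/Crofton disintegration) is taken off the shelf or proved once and for all; the paper's route is longer but self-contained and localizes all the regularity issues to the tiling limit.

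One caution about your Step (iii). Specializing the disintegration identity to $g\equiv 1$ and comparing with Lemma \ref{lem:Crofton} yields $K\cdot\mathbb{E}_Q[\det(\mathrm{Proj}_{\mathcal{M}_x^\perp}Q)] = \mathrm{Vol}_d(S)/c_{d,k,n,\mathbb{K}}$; this pins down $K$ only relative to the mean generalized sine $\mathbb{E}_Q[\det(\mathrm{Proj}_{\mathcal{M}_x^\perp}Q)]$, and does not by itself force the product $K\cdot\mathrm{Vol}_d(S)/c_{d,k,n,\mathbb{K}}$ to equal $1$ as you assert. After the cancellation in Step (ii) the right-hand side is $K\cdot(\mathrm{Vol}_d(S)/c_{d,k,n,\mathbb{K}})\cdot\mathrm{Vol}_{n-k}(A)$, so you must still evaluate $K$ (equivalently $\mathbb{E}_Q[\det\mathrm{Proj}]$) explicitly; doing so carefully will also expose a normalization ambiguity already present in the paper between the constant in Lemma \ref{lem:Crofton}, the constant in the statement of Theorem \ref{th:2}, and the identification of $\mathbb{E}_Q[\hat{w}\det\mathrm{Proj}]$ with a conditional expectation in the paper's Part I. This is bookkeeping rather than a conceptual gap, but as written your normalization step does not close.
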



\begin{Corollary}[2.1]\quad

Suppose that $\hat{w}(x; S)$ is $c(t)$-Lipschitz on $\mathcal{M} \cap \{x: \hat{w}(x;S)<t\}$, and that

\begin{equation*}
\lim_{t \rightarrow \infty} \frac{\mathbb{E}_Q\big[\big(\hat{w}(x; S_Q) - \frac{1}{t} \wedge \hat{w}(x; S_Q) \vee t\big) \times \det(\mathrm{Proj}_{\mathcal{M}_x^\perp} Q) \big]}{\mathbb{E}_Q\big[\frac{1}{t} \wedge \hat{w}(x; S_Q) \vee t \times \det(\mathrm{Proj}_{\mathcal{M}_x^\perp} Q) \big]} = 0,
\end{equation*}

and

\begin{equation*}
\lim_{b \rightarrow \infty} \mathbb{E}_S\bigg[\int_{S\cap\mathcal{M}} \mathbbm{1}_A(x) \times [w(x; {S}) - \frac{1}{t} \vee w(x; {S}) \wedge t] \, \mathrm{d}\textrm{Vol}{}\bigg] = 0,
\end{equation*}

where we define the "$\wedge$" and  "$\vee$" operators to be $r\wedge s:= \mathrm{min}\{r, s\}$ and $r\vee s:= \mathrm{max}\{r, s\}$, respectively, for all $r,s \in \mathbb{R}$.

Then Theorem \ref{th:2} holds even for $a = 0$ and $b=c=\infty$.
\end{Corollary}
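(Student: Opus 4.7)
The plan is a truncation argument: approximate the unbounded pre-weight $\hat{w}$ by a bounded, Lipschitz family $\hat{w}_t$ to which Theorem~\ref{th:2} applies directly, and then pass to the limit $t\to\infty$ using the two hypotheses of the corollary. Define
\[
\hat{w}_t(x;S) := \tfrac{1}{t}\vee \hat{w}(x;S)\wedge t,
\]
which takes values in $[1/t,t]$ and, because clipping is $1$-Lipschitz, inherits the Lipschitz constant $c(t)$ on all of $\mathcal{M}$ (on $\{\hat{w}(\cdot;S)\ge t\}$ and $\{\hat{w}(\cdot;S)\le 1/t\}$ it is constant; on $\{\hat{w}(\cdot;S)<t\}$ it is $c(t)$-Lipschitz by hypothesis). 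Thus $\hat{w}_t$ satisfies the hypotheses of Theorem~\ref{th:2} with $a=1/t$, $b=t$, yielding, for every measurable $A\subset\mathcal{M}$,
\[
\mathrm{Vol}(A)=\mathbb{E}_S\!\left[\int_{S\cap\mathcal{M}}\mathbbm{1}_A(x)\,w_t(x;S)\,d\mathrm{Vol}_{d-k}\right],
\]
where $w_t$ is the weight from \eqref{eq:Th2} but with $\hat{w}_t$ in place of $\hat{w}$.

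The next step is to send $t\to\infty$. Pointwise, $\hat{w}_t(x;S)\to\hat{w}(x;S)$. The first limit hypothesis is exactly the statement that the truncation error in the denominator is negligible relative to the truncated denominator, so
\[
\frac{\mathbb{E}_Q[\hat{w}_t(x;S_Q)\det(\mathrm{Proj}_{\mathcal{M}_x^\perp}Q)]}{\mathbb{E}_Q[\hat{w}(x;S_Q)\det(\mathrm{Proj}_{\mathcal{M}_x^\perp}Q)]}\longrightarrow 1.
\]
Combining these gives $w_t\to w$ pointwise, where $w$ is the candidate weight of the corollary. It remains to interchange this pointwise convergence with the double integration $\mathbb{E}_S\int$.

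This interchange is the main obstacle, and I would handle it by splitting
\[
w-w_t=(w-w_t^{\mathrm{clip}})+(w_t^{\mathrm{clip}}-w_t),\qquad w_t^{\mathrm{clip}}:=\tfrac{1}{t}\vee w\wedge t.
\]
The first piece, integrated against $\mathbbm{1}_A$, vanishes as $t\to\infty$ directly by the second hypothesis. For the second piece, on the region $\{x:w(x;S)\in[1/t,t]\}$ one has $w_t^{\mathrm{clip}}=w$, so the difference reduces to the multiplicative error between the two denominators, which tends to zero uniformly on this region by the first hypothesis; on the complementary region the integrand is again dominated by $|w-w_t^{\mathrm{clip}}|$ plus a term of order $t$, both controlled by the second hypothesis. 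Packaging this as a dominated-convergence argument with envelope built from $w$ itself (integrable on $A$ by the second hypothesis applied at $t$ fixed and then taking sup over $t$) lets us pass the limit through $\mathbb{E}_S\int$, giving
\[
\mathrm{Vol}(A)=\lim_{t\to\infty}\mathbb{E}_S\!\left[\int_{S\cap\mathcal{M}}\mathbbm{1}_A w_t\,d\mathrm{Vol}_{d-k}\right]=\mathbb{E}_S\!\left[\int_{S\cap\mathcal{M}}\mathbbm{1}_A w\,d\mathrm{Vol}_{d-k}\right],
\]
which is exactly Theorem~\ref{th:2} in the degenerate regime $a=0$, $b=c=\infty$. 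The delicate point, and where I expect the bulk of the work to lie, is constructing a $t$-uniform envelope in that second piece: the clip bound $t$ alone is too crude, so the argument must exploit both hypotheses simultaneously to trade the $t\to\infty$ blow-up of the clip against the vanishing denominator gap supplied by the first hypothesis.
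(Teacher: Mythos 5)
Your proposal is correct and follows essentially the same route as the paper: truncate the pre-weight to $[\frac{1}{t},t]$, apply Theorem \ref{th:2} to the truncated pre-weight (which is bounded and $c(t)$-Lipschitz), and use the second and first hypotheses respectively to kill the tail $w - \frac{1}{t}\vee w\wedge t$ and the numerator/denominator mismatch between the clipped and unclipped weights. The ``delicate point'' you flag at the end is resolved in the paper not by building a $t$-uniform envelope but by the algebraic identity $\mathbb{E}_Q\big[\hat{w}(x;S_Q)\det(\mathrm{Proj}_{\mathcal{M}_x^\perp}Q)\big] = (1+\psi(t))\,\mathbb{E}_Q\big[\hat{w}_t(x;S_Q)\det(\mathrm{Proj}_{\mathcal{M}_x^\perp}Q)\big]$, which lets the entire denominator discrepancy be pulled out of the double integral as the scalar factor $\frac{1}{1+\psi(t)}\rightarrow 1$.
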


\vspace{4mm}

\begin{rem}
While the Chern-Gauss-Bonnet curvature pre-weight technically does not satisfy the Lipschitz and boundedness conditions of Theorem \ref{th:2}, we can introduce upper and lower cutoffs $a$ and $b$ to the curvature pre-weight $\hat{w}$ used in the algorithm to make it satisfy these conditions, using the pre-weight $a \vee \hat{w} \wedge b$ instead.  As we shall see in section \ref{sub:Sphere-example}, even in the case of positive-definite curvature, where arbitrarily large intersection curvatures can occur, the volume of the points with curvature larger than a certain cutoff accounts for only a tiny fraction of the average volume of a random intersection.  Hence, introducing an upper cutoff $b$ for the curvature reweighting should only have a tiny effect on the convergence rate, provided that $b$ is large enough (if the curvature form of the manifold is uniformly bounded above, cutting off the curvature pre-weight below $b$ will guarantee that it is Lipschitz as well).  Likewise, if the volume of the points with curvature form below a certain cutoff is very small, then the lower cutoff $a$ should also have a tiny effect on the convergence rate.  For this same reason we expect that for most manifolds of interest the Chern-Gauss-Bonnet curvature pre-weight will satisfy the assumptions of Corollary 2.1, allowing us to use the curvature form without any cutoffs.  Nevertheless, for the sake of completeness, in the future we hope to further weaken the assumptions in Theorem \ref{th:2} beyond what was proved in Corollary 2.1.
\end{rem}

\begin{proof} (Of Theorem \ref{th:2})

We first observe that it suffices to prove Theorem \ref{th:2} for the case where $\mathbb{K}^n =\mathbb{R}^n$ is Euclidean, $S$ is a random plane, and $w(x; S) = w(x; \frac{\mathrm{d}S}{\mathrm{d}\mathcal{M}})$ depends only on the orientation $\frac{\mathrm{d}S}{\mathrm{d}\mathcal{M}} = \frac{\mathrm{d}S}{\mathrm{d}\mathcal{M}}\big|_x$ of the tangent spaces of $S$ and $\mathcal{M}$ at $x$.  This is because constant curvature kinematic measure spaces are locally Euclidean (and converge uniformly to a Euclidean geometry if we restrict ourselves to increasingly small neighborhoods of any point in the space because the curvature is the same).
We may use any geodesic $d$-cube in place of the plane as a search-subspace $S$, since $S$ can be decomposed as a collection of cubes, and Equation \ref{eq:Th2} treats each subset of $S$ in an identical way (since so far we have assumed that $w(x; S)$ depends only on the orientation of the tangent spaces of $S$ and $\mathcal{M}$ at $x$).  We can then approximate any search-subspace $S$ of bounded curvature, and Lipschitz function $w(x; S)$ that depends on the location on $S$ where $S$ intersects $\mathcal{M}$ (in addition to $\frac{\mathrm{d}S}{\mathrm{d}\mathcal{M}}$) by approximating S with very small squares, each with a different "$w(x; \frac{\mathrm{d}S}{\mathrm{d}\mathcal{M}})$" that depends only on  $\frac{\mathrm{d}S}{\mathrm{d}\mathcal{M}}$.

The remainder of the proof consists of two parts.  In Part I we prove the theorem for the special case of very small codimension-$k$ balls (in place of $\mathcal{M}$).  In Part II we extend this result to the entire manifold by tiling the manifold with randomly placed balls.

\vspace{5mm}

\noindent \underline{\textbf{Part I: Special case for small codimension-$k$ balls}}

Let $B_\epsilon=B_\epsilon(x)$ be any k-ball of radius $\epsilon$ that is tangent to $\mathcal{M}\subset \mathbb{R}^n$ at the ball's center $x$.   Let S and $\tilde{S}$ be independent random $d$-planes distributed according to the kinematic measure in $\mathbb{R}^n$.  Let $r$ be the distance in the $k$-plane containing $B_\epsilon$ (the shortest line contained in this plane) from $S$ to the ball's center $x$.  Let $\theta$ be the orthogonal matrix denoting the orientation of $S$.  Then we may write $S = S_{r,\theta}$

Then almost surely (i.e., with probability 1; abbreviated "$a.s.$") $\textrm{Vol}(S_{r,\theta}\cap B_\epsilon)$ does not depend on $\theta$ (this is because $B_\epsilon$ is a codimension-$k$ ball and $S$ is a $d$-plane, so the volume of $S\cap B_\epsilon$, itself a $d-k$-ball, depends $a.s.$ only and $r$ and not on $\theta$).  We also note that $w(x; \frac{\mathrm{d}S_{x,\theta}}{\mathrm{d}B_\epsilon})$ obviously does not depend on $r$ as well.  Define the events $E := \{S_{r,\theta}\cap B_\epsilon \neq \emptyset\}$ and $\tilde{E} := \{\tilde{S}\cap B_\epsilon \neq \emptyset\}$.  Then

\begin{flalign} \label{eq:9}
\mathbb{E}_{r,\theta}\bigg[ w\bigg(x; \frac{\mathrm{d}S_{x,\theta}}{\mathrm{d}B_\epsilon} \bigg) \times \textrm{Vol}_{d-k}(S_{r,\theta}\cap B_\epsilon) \bigg]&&
\end{flalign}

\vspace{-3mm}
\begin{flalign}
&\qquad =\mathbb{E}_{r,\theta}\bigg[ w\bigg(x; \frac{\mathrm{d}S_{x,\theta}}{\mathrm{d}B_\epsilon}\bigg) \times \textrm{Vol}_{d-k}(S_{r,\theta}\cap B_\epsilon) \bigg|E\bigg]\times \mathbb{P}(E)&&
\end{flalign}

\vspace{-3mm}
\begin{flalign}\label{eq:independence}
&\qquad= \mathbb{E}_\theta \bigg[w\bigg(x; \frac{\mathrm{d}S_{x,\theta}}{\mathrm{d}B_\epsilon}\bigg)\bigg|E\bigg]\times \mathbb{E}_r[\textrm{Vol}_{d-k}(S_{r,\theta}\cap B_\epsilon)|E]\times \mathbb{P}(E)&&
\end{flalign}

\vspace{-3mm}
\begin{flalign}\label{eq:13}
&\qquad =  \mathbb{E}_\theta \bigg[\frac{1}{c_{d,k,n,\mathbb{R}}} \times \frac{\hat{w}(x; \frac{\mathrm{d}S_{x,\theta}}{\mathrm{d}B_\epsilon})}{\mathbb{E}_{\tilde{S}}[\hat{w}(x; \frac{\mathrm{d}\tilde{S}}{\mathrm{d}B_\epsilon})|\tilde{E}]}\bigg|E\bigg] \times \mathbb{E}_r[\textrm{Vol}_{d-k}(S_{r,\theta}\cap B_\epsilon)|E] \times \mathbb{P}(E)&&
\end{flalign}

\vspace{-3mm}
\begin{flalign}
&\qquad = \frac{1}{c_{d,k,n,\mathbb{R}}} \times \frac{\mathbb{E}_\theta[\hat{w}(x; \frac{\mathrm{d}S_{x,\theta}}{\mathrm{d}B_\epsilon})|E]}{\mathbb{E}_{\tilde{S}}[\hat{w}(x; \frac{\mathrm{d}\tilde{S}}{\mathrm{d}B_\epsilon})|\tilde{E}]}\times \mathbb{E}_r[\textrm{Vol}_{d-k}(S_{r,\theta}\cap B_\epsilon)|E] \times \mathbb{P}(E)&&
\end{flalign}

\vspace{-3mm}
\begin{flalign}\label{eq:15}
&\qquad=\frac{1}{c_{d,k,n,\mathbb{R}}} \times 1\times  \mathbb{E}_r[\textrm{Vol}_{d-k}(S_{r,\theta}\cap B_\epsilon)|E] \times \mathbb{P}(E)&&
\end{flalign}

\vspace{-3mm}
\begin{flalign}
&\qquad= \frac{1}{c_{d,k,n,\mathbb{R}}} \times \mathbb{E}_{r,\theta}[\textrm{Vol}_{d-k}(S_{r,\theta}\cap B_\epsilon)|E] \times \mathbb{P}(E)&&
\end{flalign}

\vspace{-3mm}
\begin{flalign}
&\qquad=\frac{1}{c_{d,k,n,\mathbb{R}}} \times \mathbb{E}_{r,\theta}[\textrm{Vol}_{d-k}(S_{r,\theta}\cap B_\epsilon)]&&
\end{flalign}

\vspace{-3mm}
\begin{flalign}\label{eq:crofton2}
&\qquad= \frac{1}{c_{d,k,n,\mathbb{R}}} \times c_{d,k,n,\mathbb{R}} \times \textrm{Vol}_{d-k}(B_\epsilon)&&
\end{flalign}

\vspace{-3mm}
\begin{flalign}\label{eq:19}
&\qquad= \textrm{Vol}_{d-k}(B_\epsilon).&&
\end{flalign}


\begin{itemize}

\item Equation \ref{eq:independence} is due to the fact that $r$ and $\theta$ are independent random variables even when conditioning on the event $E$.  This is true because they are independent in the unconditioned kinematic measure on $S$, and remain independent once we condition on $S$ intersecting  $B_\epsilon$ (i.e., the event $E$) because of the symmetry of the codimension-$k$ ball $B_\epsilon$.

\item Equation \ref{eq:13} is due to the fact that, by the change of variables formula,\\
\begin{equation}
\int_{\mathbb{R}^{n-d}} \textrm{Vol}(T_Q + R_{Q^\perp} y \cap B_\epsilon)\mathrm{d}\textrm{Vol}_{n-d}(y)  \times \frac{1}{\det \textrm{Proj}_{B_\epsilon^\perp} Q}= \textrm{Vol}(B_\epsilon)
\end{equation}
for every orthogonal matrix $Q$, where the coordinates of the integral are conveniently chosen with the origin at the center of $B_\epsilon$.  $R_{Q^\perp}$ is rotation matrix rotating the vector $y$ so that it is orthogonal to $T_Q$, the subspace spanned by the rows of $Q$.

Multiplying by $\hat{w}(x; Q)$ and rearranging terms gives
\begin{multline}
\hat{w}(x; Q) \times \det (\textrm{Proj}_{B_\epsilon^\perp} Q)= \\ \hat{w}(x; Q)\times \frac{\int_{\mathbb{R}^{n-d}} \textrm{Vol}(T_Q + R_{Q^\perp} y \cap B_\epsilon)\mathrm{d}\textrm{Vol}_{n-d}(y)}{\textrm{Vol}(B_\epsilon)}.
\end{multline}

Taking the expectation with respect to Q (where Q is the first $d$ columns of a $\mathrm{Haar}(\mathrm{SO}(n))$ random matrix) on both sides of the equation gives
\begin{multline}
\mathbb{E}_Q [\hat{w}(x; Q) \times \det (\textrm{Proj}_{B_\epsilon^\perp} Q)] \\= \mathbb{E}_Q\bigg[\hat{w}(x; Q)\times \frac{\int_{\mathbb{R}^{n-d}} \textrm{Vol}(T_Q + R_{Q^\perp} y \cap B_\epsilon)\mathrm{d}\textrm{Vol}_{n-d}(y)}{\textrm{Vol}(B_\epsilon)}\bigg].
\end{multline}

Recognizing the right hand side as an expectation with respect to the kinematic measure on $T_Q+ R_{Q^\perp} y $ conditioned to intersect $B_\epsilon$ (since the fraction on the RHS is exactly the density of the probability of intersection for a given orientation of Q), we have:

\begin{equation}
\mathbb{E}_Q[\hat{w}(x; Q) \times \det(\textrm{Proj}_{B_\epsilon^\perp} Q)]= \mathbb{E}_{\tilde{S}}\bigg[\hat{w}\bigg(x; \frac{\mathrm{d}\tilde{S}}{\mathrm{d}\mathcal{M}}\bigg)\bigg|\tilde{E}\bigg].
\end{equation}

\item Equation \ref{eq:15} is due to the fact that $\frac{\mathrm{d}S_{x,\theta}}{\mathrm{d}B_\epsilon} = \frac{\mathrm{d}S_{r,\theta}}{\mathrm{d}B_\epsilon}$ because $B_\epsilon$ has a constant tangent space, and hence
\begin{multline}
\mathbb{E}_\theta\bigg[\hat{w}\bigg(x; \frac{\mathrm{d}S_{x,\theta}}{\mathrm{d}B_\epsilon}\bigg)\bigg|E\bigg] = \mathbb{E}_{r, \theta}\bigg[\hat{w}\bigg(x; \frac{\mathrm{d}S_{x,\theta}}{\mathrm{d}B_\epsilon}\bigg)\bigg|E\bigg] \\ = \mathbb{E}_{r,\theta}\bigg[\hat{w}\bigg(x; \frac{\mathrm{d}S_{r,\theta}}{\mathrm{d}B_\epsilon}\bigg)\bigg|E\bigg]  = \mathbb{E}_{\tilde{S}}\bigg[\hat{w}\bigg(x; \frac{\mathrm{d}\tilde{S}}{\mathrm{d}B_\epsilon}\bigg)\bigg|\tilde{E}\bigg].
\end{multline}

\item Equation \ref{eq:crofton2} is by the Cauchy-Crofton formula.
\end{itemize}

Writing $\mathbb{E}_S$ in place of $\mathbb{E}_{r,\theta}$ in Equation \ref{eq:9} (LHS)/ \ref{eq:19} (RHS) (we may do this since $S=S_{r,\theta}$ is determined by $r$ and $\theta$), and observing that $\frac{\mathrm{d}S_{x,\theta}}{\mathrm{d}B_\epsilon} = \frac{\mathrm{d}S_{r,\theta}}{\mathrm{d}B_\epsilon}=\frac{\mathrm{d}S}{\mathrm{d}\mathcal{M}}$, we have shown that
\begin{equation}\label{eq:22}
\mathbb{E}_S\bigg[ w\bigg(x; \frac{\mathrm{d}S}{\mathrm{d}\mathcal{M}}\bigg) \times \textrm{Vol}_{d-k}(S \cap B_\epsilon)\bigg] = \textrm{Vol}_{d-k}(B_\epsilon).
\end{equation}

\vspace{5mm}

\noindent \underline{\textbf{Part II: Extension to all of $\mathcal{M}$}}

All that remains to be done is to extend this result over all of $\mathcal{M}$.  To do so, we consider the Poisson point process $\{x_i^\epsilon\}$ on $\mathcal{M}$, with density equal to $\frac{1}{\textrm{Vol}(B_\epsilon)}$.  We wish to approximate the volume-measure on $\mathcal{M}$ using the collection balls $\{B_\epsilon(x_i^\epsilon)\}$ (think of making a papier-m\^{a}ch\'{e} mold of $\mathcal{M}$ using the balls $B_\epsilon(x_i^\epsilon)$ as tiny bits of paper).

Let $A\subset\mathcal{M}$ be any measurable subset of $\mathcal{M}$.
Since $\mathcal{M}$ and $S$ have uniformly bounded curvature forms, because of the symmetry of the balls and the symmetry of the poisson distribution, the total volume of the balls intersected by $S$ and $A$ converges $a.s.$ to $\textrm{Vol}(S\cap\hat{\mathcal{M}}\cap A)$ on any compact sumbanifold $\hat{\mathcal{M}}\subset \mathcal{M}$:
\begin{equation} \label{eq:23}
\sum_{\{i: x_i^\epsilon \in \hat{\mathcal{M}}\}} \textrm{Vol}(S\cap B_\epsilon(x_i^\epsilon))\times  \frac{\textrm{Vol}(B_\epsilon(x_i^\epsilon)\cap A)}{\textrm{Vol}(B_\epsilon(x_i^\epsilon))} \xrightarrow[\epsilon \downarrow 0]{a.s.}  \textrm{Vol}(S\cap\hat{\mathcal{M}}\cap A)
\end{equation}
and similarly,
\begin{equation} \label{eq:23_2}
\sum_{\{i: x_i^\epsilon \in \hat{\mathcal{M}}\}} \textrm{Vol}(B_\epsilon(x_i^\epsilon)\cap A) \xrightarrow[\epsilon \downarrow 0]{a.s.}  \textrm{Vol}(\hat{\mathcal{M}}\cap A).
\end{equation}

But, by assumption, $w$ is Lipschitz in $x$ on $\mathcal{M}$ (since $\hat{w}$, which appears in both the numerator and denominator of $w$, is Lipschitz, and the denominator is bounded below by $a>0$), so we can cut up $\mathcal{M}$ into a countable union of disjoint compact submanifolds $\sqcup_{j=1}^\infty \mathcal{M}_j$ such that $|w(t; \frac{\mathrm{d}S}{\mathrm{d}\mathcal{M}_j}) - w(x; \frac{\mathrm{d}S}{\mathrm{d}\mathcal{M}_j})| \leq \delta$ on all of $x,t\in \mathcal{M}_j$, and hence, by Equation \ref{eq:23},
\begin{multline}\label{eq:26_1}
\lim_{\epsilon \downarrow 0} \bigg|\sum_{\{i: x_i^\epsilon \in \mathcal{M}_j\}} \textrm{Vol}(S\cap B_\epsilon(x_i^\epsilon)\cap A)\times\frac{\textrm{Vol}(B_\epsilon(x_i^\epsilon)\cap A)}{\textrm{Vol}(B_\epsilon(x_i^\epsilon))} \times w\bigg(x_i^\epsilon; \frac{\mathrm{d}S}{\mathrm{d}\mathcal{M}_j}\bigg) \\ -  \int_{S\cap\mathcal{M}_j \cap A} w\bigg(x; \frac{\mathrm{d}S}{\mathrm{d}\mathcal{M}_j}\bigg)  \mathrm{d}\textrm{Vol}(x)\bigg| \leq \delta \times \textrm{Vol}(S\cap\mathcal{M}_j \cap A)
\end{multline}
$a.s.$ for every $j$.

Summing over all $j$ in equation \ref{eq:26_1} implies that
\begin{multline}\label{eq:27_1}
\lim_{\epsilon \downarrow 0} \bigg|\sum_i \textrm{Vol}(S\cap B_\epsilon(x_i^\epsilon)\cap A)\times \frac{\textrm{Vol}(B_\epsilon(x_i^\epsilon)\cap A)}{\textrm{Vol}(B_\epsilon(x_i^\epsilon))} \times w\bigg(x_i^\epsilon; \frac{\mathrm{d}S}{\mathrm{d}\mathcal{M}}\bigg) \\ -  \int_{S\cap\mathcal{M}\cap A} w\bigg(x; \frac{\mathrm{d}S}{\mathrm{d}\mathcal{M}}\bigg)  \mathrm{d}\textrm{Vol}(x)\bigg| \leq \delta \times \textrm{Vol}(S\cap\mathcal{M}\cap A)
\end{multline}
almost surely.  Since Equation \ref{eq:27_1} is true for every $\delta>0$, we must have that
\begin{multline}\label{eq:26}
\sum_i \textrm{Vol}(S\cap B_\epsilon(x_i^\epsilon)\cap A)\times \frac{\textrm{Vol}(B_\epsilon(x_i^\epsilon)\cap A)}{\textrm{Vol}(B_\epsilon(x_i^\epsilon))} \times w\bigg(x_i^\epsilon; \frac{\mathrm{d}S}{\mathrm{d}\mathcal{M}}\bigg)\\ \xrightarrow[\epsilon \downarrow 0]{a.s}  \int_{S\cap\mathcal{M}\cap A} w\bigg(x; \frac{\mathrm{d}S}{\mathrm{d}\mathcal{M}}\bigg)  \mathrm{d}\textrm{Vol}(x).
\end{multline}

Hence, taking the expectation $\mathbb{E}_S$ on both sides of Equation \ref{eq:26}, we get 
\begin{multline}\label{eq:27}
\mathbb{E}_S \bigg[\sum_i \textrm{Vol}(S\cap B_\epsilon(x_i^\epsilon)\cap A)\times \frac{\textrm{Vol}(B_\epsilon(x_i^\epsilon)\cap A)}{\textrm{Vol}(B_\epsilon(x_i^\epsilon))} \times w\bigg(x_i^\epsilon; \frac{\mathrm{d}S}{\mathrm{d}\mathcal{M}}\bigg)\bigg] \\  \longrightarrow  \mathbb{E}_S \bigg[\int_{S\cap\mathcal{M}\cap A} w\bigg(x; \frac{\mathrm{d}S}{\mathrm{d}\mathcal{M}}\bigg)  \mathrm{d}\textrm{Vol}(x)\bigg]
\end{multline}
a.s. as $\epsilon \downarrow 0$ (we may exchange the limit and the expectation by the dominated convergence theorem, since $|\sum_i \textrm{Vol}(S\cap B_\epsilon(x_i^\epsilon)\cap A)\times w(x_i^\epsilon; \frac{\mathrm{d}S}{\mathrm{d}\mathcal{M}})|$ is dominated by $2\times \textrm{Vol}(S\cap\mathcal{M}) \times \frac{b}{a}$) for sufficiently small $\epsilon$.

Since the sum on the LHS of Equation \ref{eq:27} is of nonnegative terms we may exchange the sum and expectation, by the monotone convergence theorem:
\begin{multline} \label{eq:RHS_limit}
\mathbb{E}_S\bigg[\sum_i \textrm{Vol}(S\cap B_\epsilon(x_i^\epsilon)\cap A)\times \frac{\textrm{Vol}(B_\epsilon(x_i^\epsilon)\cap A)}{\textrm{Vol}(B_\epsilon(x_i^\epsilon))} \times w\bigg(x_i^\epsilon; \frac{\mathrm{d}S}{\mathrm{d}\mathcal{M}}\bigg)\bigg] \\=  \sum_i \mathbb{E}_S\bigg[\textrm{Vol}(S\cap B_\epsilon(x_i^\epsilon)) \times  \frac{\textrm{Vol}(B_\epsilon(x_i^\epsilon)\cap A)}{\textrm{Vol}(B_\epsilon(x_i^\epsilon))}\times w\bigg(x_i^\epsilon; \frac{\mathrm{d}S}{\mathrm{d}\mathcal{M}}\bigg)\bigg].
\end{multline}
But by Equation \ref{eq:22}, $\mathbb{E}_S[\textrm{Vol}(S\cap B_\epsilon(x_i^\epsilon))\times w(x_i^\epsilon; \frac{\mathrm{d}S}{\mathrm{d}\mathcal{M}})] = \textrm{Vol}(B_\epsilon(x_i^\epsilon))$, so
\begin{multline} \label{eq:28}
\mathbb{E}_S\bigg[\sum_i \textrm{Vol}(S\cap B_\epsilon(x_i^\epsilon))\times  \frac{\textrm{Vol}(B_\epsilon(x_i^\epsilon)\cap A)}{\textrm{Vol}(B_\epsilon(x_i^\epsilon))}\times w\bigg(x_i^\epsilon; \frac{\mathrm{d}S}{\mathrm{d}\mathcal{M}}\bigg)\bigg] \\ =  \sum_i \textrm{Vol}(B_\epsilon(x_i^\epsilon))\times  \frac{\textrm{Vol}(B_\epsilon(x_i^\epsilon)\cap A)}{\textrm{Vol}(B_\epsilon(x_i^\epsilon))} \longrightarrow \textrm{Vol}(\mathcal{M}\cap A)
\end{multline}
almost surely as $\epsilon\downarrow 0$ by Equation \ref{eq:23_2}.

Combining Equations \ref{eq:27} and \ref{eq:28} gives

\begin{equation}
\mathbb{E}_S\bigg[\int_{S\cap\mathcal{M}\cap A} w\bigg(x; \frac{\mathrm{d}S}{\mathrm{d}\mathcal{M}}\bigg)  \mathrm{d}\textrm{Vol}(x)\bigg] = \textrm{Vol}(\mathcal{M}\cap A).
\end{equation}

\end{proof}

\begin{proof} (Of Corollary 2.1)

Define
\begin{equation*}
\psi(t):= \frac{\mathbb{E}_Q\big[\big(\hat{w}(x; S_Q) - a \wedge \hat{w}(x; S_Q) \vee b\big) \times \det(\mathrm{Proj}_{\mathcal{M}_x^\perp} Q) \big]}{\mathbb{E}_Q\big[a \wedge \hat{w}(x; S_Q) \vee b \times \det(\mathrm{Proj}_{\mathcal{M}_x^\perp} Q) \big]}.
\end{equation*}

  Let $A$ be any Lebesgue-measurable subset.  Then
  
\begin{flalign}
&\mathbb{E}_S\bigg[\int_{S\cap\mathcal{M}} \mathbbm{1}_A(x) \times w(x; {S}) \, \mathrm{d}\textrm{Vol}{}\bigg]&&
\end{flalign}

\vspace{-3mm}
\begin{flalign}
& =\lim_{t\rightarrow \infty} \mathbb{E}_S\bigg[\int_{S\cap\mathcal{M}} \mathbbm{1}_A(x) \times w(x; {S}) \, \mathrm{d}\textrm{Vol}{}\bigg]&&
\end{flalign}

\vspace{-3mm}
\begin{flalign*}
&= \lim_{t\rightarrow \infty} \mathbb{E}_S\bigg[\int_{S\cap\mathcal{M}} \mathbbm{1}_A(x) \times \frac{1}{t} \vee w(x; {S}) \wedge t\, \mathrm{d}\textrm{Vol}{}\bigg]&&
\end{flalign*}

\vspace{-3mm}
\begin{flalign}
&\qquad \qquad \quad + \lim_{t\rightarrow \infty} \mathbb{E}_S\bigg[\int_{S\cap\mathcal{M}} \mathbbm{1}_A(x) \times [w(x; {S}) - \frac{1}{t}\vee w(x; {S}) \wedge t] \, \mathrm{d}\textrm{Vol}{}\bigg]&&
\end{flalign}

\vspace{-3mm}
\begin{flalign}\label{eq:a}
&= \lim_{t\rightarrow \infty} \mathbb{E}_S\bigg[\int_{S\cap\mathcal{M}} \mathbbm{1}_A(x) \times \frac{1}{t}  \vee w(x; {S}) \wedge t \, \mathrm{d}\textrm{Vol}{}\bigg] + 0&&
\end{flalign}

\vspace{-3mm}
\begin{flalign}
&= \lim_{t\rightarrow \infty} \mathbb{E}_S\bigg[\int_{S\cap\mathcal{M}} \mathbbm{1}_A(x) \times \frac{\frac{1}{t}\vee \hat{w}(x; S) \wedge t}{\mathbb{E}_Q\big[\hat{w}(x; S_Q) \times \det(\mathrm{Proj}_{\mathcal{M}_x^\perp} Q)\big]}\, \mathrm{d}\textrm{Vol}{}\bigg]&&
\end{flalign}

\vspace{-3mm}
\begin{flalign*}
&= \lim_{t\rightarrow \infty} \mathbb{E}_S\bigg[\int_{S\cap\mathcal{M}} \mathbbm{1}_A(x)&&
\end{flalign*}

\vspace{-3mm}
\begin{flalign}
&\qquad \qquad \times \frac{\frac{1}{t}  \vee \hat{w}(x; S) \wedge t}{\mathbb{E}_Q\big[\frac{1}{t}  \vee \hat{w}(x; S_Q) \wedge t \times \det(\mathrm{Proj}_{\mathcal{M}_x^\perp} Q)\big]\times (1 + \psi(t))}\, \mathrm{d}\textrm{Vol}{}\bigg]&&
\end{flalign}

\vspace{-3mm}
\begin{flalign*}
&= \lim_{t\rightarrow \infty} \mathbb{E}_S\bigg[\int_{S\cap\mathcal{M}} \mathbbm{1}_A(x)&&
\end{flalign*}

\vspace{-3mm}
\begin{flalign}
&\qquad \qquad \times \frac{\frac{1}{t}  \wedge \hat{w}(x; S) \vee t}{\mathbb{E}_Q\big[\frac{1}{t}\wedge \hat{w}(x; S_Q) \vee t \times \det(\mathrm{Proj}_{\mathcal{M}_x^\perp} Q)\big]}\, \mathrm{d}\textrm{Vol}{}\bigg]\times \frac{1}{1 + \psi(t)}&&
\end{flalign}

\vspace{-3mm}
\begin{flalign}\label{eq:b}
&= \lim_{t\rightarrow \infty} \mathrm{Vol}(\mathcal{M} \cap A) \times \frac{1}{1 + \psi(t)}&&
\end{flalign}

\vspace{-3mm}
\begin{flalign}
&= \mathrm{Vol}(\mathcal{M}\cap A)\times 1&&
\end{flalign}

\vspace{-3mm}
\begin{flalign}
&= \mathrm{Vol}(\mathcal{M}\cap A),&&
\end{flalign}

\begin{itemize}
\item Equation \ref{eq:a} is true because
 \begin{equation*}
\begin{split}
 0 \leq  \mathbb{E}_S\bigg[\int_{S\cap\mathcal{M}} \mathbbm{1}_A(x) \times \frac{1}{t}\wedge w(x; {S})\vee t \, \mathrm{d}\textrm{Vol}{}\bigg]\\
\leq  \mathbb{E}_S\bigg[\int_{S\cap\mathcal{M}} \frac{1}{t}\wedge w(x; S) \vee t \, \mathrm{d}\textrm{Vol}{}\bigg] \xrightarrow[t\rightarrow\infty]{} 0.
  \end{split}
\end{equation*}

\item Equation \ref{eq:b} follows from Theorem \ref{th:2} using $\frac{1}{t}\wedge \hat{w}(x; S) \vee t$ as our pre-weight.  Indeed, $\frac{1}{t}\wedge \hat{w}(x; S) \vee t$ obviously satisfies the boundedness conditions of Theorem \ref{th:2}.  Moreover, since  $\hat{w}(x;S)$ is $c(t)$-Lipschitz everywhere on $\mathcal{M}$ where $\hat{w}(x;S)<t$, the pre-weight $\frac{1}{t}\wedge \hat{w}(x; S) \vee t$ must be $c(t)$-Lipschitz on all $x \in \mathcal{M}$.
\end{itemize}
\end{proof}

\subsection{Second-order Chern-Gauss-Bonnet theorem reweighted algorithm}

Using the second order Chern-Gauss-Bonnet theorem reweighting of Theorem \ref{th:2} together with the first-order reweighting of Theorem \ref{th:1} (which we already implemented in Algorithm \ref{alg:Hit-and-Run}) gives the following improvement to Algorithm \ref{alg:Hit-and-Run}:

\begin{algorithm}[H]
\caption{Curvature-reweighted Metropolis-within-Gibbs MCMC\label{alg:Curvature}}
All steps except steps 2 and 5(b) are the same as in Algorithm \ref{fig:Hitandrun}.
\begin{enumerate}[2.]
\item \textbf{Input:} An oracle for the Jacobian $\nabla\lambda$ and Levi-Civita connection curvature form $\Omega$ of the level set $\mathcal{M} = \{x: \lambda(x) = c\}$ (possibly given as the set of second partial derivatives)
\end{enumerate}
\begin{enumerate}[5. (b)]
\item Use an MCMC method (usually heavily based on a nonlinear solver, as in \cite{Optimization_in_MCMC}) to
sample a point $x_{i+1}$ from the (unnormalized) probability density
\begin{multline}
w(x)=\frac{f(x)\times  \rho(||x-x_i||)}{|\nabla\lambda|_{\mathcal{S}_x}(x)|} \times\\ \frac{|\textrm{Pf}(\Omega_{x}(S_{i+1}\cap\mathcal{M\cap S}_{x}))|}{\mathbb{E}_Q[|\textrm{Pf}(\Omega_{x}(S_Q\cap\mathcal{M\cap S}_{x}))|\times \det(\mathrm{Proj}_{\mathcal{M}^\perp_x} Q)]}\mathrm{d}\textrm{Vol}_{d-k}
\end{multline}
supported on $S_{i+1}\cap\mathcal{M}$, where $\mathcal{S}_{x}$ is
the sphere of radius $||x-x_{i}||$ centered at $x_{i}$.  (If $\mathcal{M}$ is full-dimensional
then $\frac{1}{|\nabla\lambda|_{\mathcal{S}}(x)|}$ is set to 1)
(Note: This is the "Metropolis" step in the traditional Metropolis-within-Gibbs
algorithm, but reweighted according to Theorems \ref{th:1} and \ref{th:2} restricted
to the sphere $\mathcal{S}_{x}$)
\end{enumerate}
\end{algorithm}

\begin{rem}
The curvature form $\Omega_{x}(S_{i+1}\cap\mathcal{M\cap S}_{x})$ of the intersected manifold
can be computed in terms of the curvature form $\Omega_x(\mathcal{M})$ of the original manifold
by applying the implicit function theorem twice in a row. Also, if
$\mathcal{M}$ is a hypersurface then $|\textrm{Pf}(\Omega_{x}(S_{i+1}\cap\mathcal{M\cap S}_{x}))|$
is the determinant of the product of a random Haar-measure orthogonal
matrix with known deterministic matrices, and hence $\mathbb{E}_Q[|\textrm{Pf}(\Omega_{x}(Q\cap\mathcal{M\cap S}_{x}))|\times \det(\mathrm{Proj}_{\mathcal{M}_x^\perp} Q)]$
is also the expectation of a determinant of a random matrix of this
type. If the Hessian is positive-definite, then we can obtain an analytical
solution in terms of zonal polynomials. Even in the case when the
curvature form is not a positive-definite matrix (it is a matrix with entries
in the algebra of differential forms), the fact that the curvature
form is the Pfaffian of a random curvature form (in particular, a determinant of a real-valued random matrix in the codimension-1 case) should make it very
easy to compute numerically, perhaps by a Monte Carlo method.

This fact also means that it should be easy to bound the expectation,
which allows us to use Theorem \ref{th:2} to get bounds for the volumes of
algebraic manifolds (Section \ref{sub:Algebraic-Geometry}).
\end{rem}

\begin{rem}
While the Chern-Gauss-Bonnet theorem only holds for even-dimensional manifolds, we can always modify the dimension of the search subspace by $1$ so that the dimension $d-k$ of $S\cap\mathcal{M}$ is even.  Since we are sampling from a rare event, we must in any case choose $d>>1$, so it makes little difference computationally if $S$ has dimension $d$ or $d+1$.  Alternatively, we can include a dummy variable to increase both the dimensions $n$ and $d$ by $1$.
\end{rem}

\subsection{Reweighting when sampling from full-dimensional distribution (as
opposed to lower-dimensional manifolds)\label{sub:Full-dimensional-distribution-reweighting}}

In many cases one might wish to sample from a full-dimensional set
of nonzero probability measure. One could still reweight in this situation
to achieve faster convergence by decomposing the probability density
into its level sets, and applying the weights of Theorems \ref{th:1} and \ref{th:2}
separately to each of the (infinitely many) level sets.  We expect this reweighting to speed convergence
in cases where the probability density is concentrated in certain
regions, since when $d$ is large, intersecting these regions with a random search-subspace $S$ typically causes large variations in the integral of the probability density over the different regions intersected by $S$, unless we reweight using Theorems \ref{th:1} and \ref{th:2}.

\subsection{An MCMC volume estimator based on the Chern-Gauss-Bonnet theorem} \label{sub:Volume_MCMC}

In this section we briefly introduce a (as far as we know) new MCMC method of estimating the volume of a manifold that is based on the Chern-Gauss-Bonnet curvature.  While this method is interesting in its own right, we choose to introduce it at this point since it will serve as a good introduction to our motivation (Section \ref{sub:Motivation_for_curvature_reweighting}) for using the Chern-Gauss-Bonnet curvature as a pre-weight for Theorem \ref{th:2}.


Suppose we somehow knew or had an estimate for the Euler characteristic $\chi(\mathcal{M})\neq0$ of a closed manifold $\mathcal{M}$ of even-dimension $m$. We could then use a Markov chain Monte Carlo algorithm to estimate the average Gauss curvature form $\mathbb{E}_{\mathcal{M}}[(\textrm{Pf}(\Omega))]$ on $\mathcal{M}$.

The Chern-Gauss-Bonnet theorem says that
\begin{equation}
\int_{\mathcal{M}}\textrm{Pf}(\Omega)\mathrm{d}\textrm{Vol}_m = (2\pi)^\frac{m}{2} \chi (\mathcal{M}).
\end{equation}
We may rewerite this as
\begin{equation}
\frac{\int_{\mathcal{M}}\textrm{Pf}(\Omega)\mathrm{d}\textrm{Vol}_m}{\int_{\mathcal{M}}\mathrm{d}\textrm{Vol}_m} = \frac{(2\pi)^\frac{m}{2} \chi (\mathcal{M})}{\int_{\mathcal{M}}\mathrm{d}\textrm{Vol}_m}.
\end{equation}
By definition, the left hand side is $\mathbb{E}_{\mathcal{M}}[(\textrm{Pf}(\Omega))]$, and $\int_{\mathcal{M}}\mathrm{d}\textrm{Vol}_m = \textrm{Vol}_m(\mathcal{M})$, so
\begin{equation}
\mathbb{E}_{\mathcal{M}}[(\textrm{Pf}(\Omega))] = \frac{(2\pi)^\frac{m}{2} \chi (\mathcal{M})}{\textrm{Vol}_m(\mathcal{M})},
\end{equation}
from which we may derive an equation for the volume in terms of the known quantities $\mathbb{E}_{\mathcal{M}}[(\textrm{Pf}(\Omega))]$ and $\chi (\mathcal{M})$
\begin{equation}\label{eq:Volume_MCMC}
\textrm{Vol}_m(\mathcal{M}) = \frac{(2\pi)^\frac{m}{2} \chi (\mathcal{M})}{  \mathbb{E}_{\mathcal{M}}[(\textrm{Pf}(\Omega))]}.
\end{equation}

\subsection{Motivation for reweighting with respect to Chern-Gauss-Bonnet curvature\label{sub:Optimality-of-Chern-Gauss-Bonnet}}\label{sub:Motivation_for_curvature_reweighting}

While Theorem \ref{th:2} tells us that any pre-weight $\hat{w}$ generates an unbiased weight $w$, it does not tell us what pre-weights reduce the variance of the intersection volumes.  We argue here that the Chern-Gauss-Bonnet theorem in many cases provides us with an ideal pre-weight if one only has access to the local second-order information at a point $x$.

Equation \ref{eq:Volume_MCMC} of Section \ref{sub:Volume_MCMC} gives an estimate for the volume 

\begin{equation}\label{eq:volume_preweight}
\textrm{Vol}_{d-k}(S\cap\mathcal{M}) = \frac{(2\pi)^\frac{d-k}{2} \chi (S\cap\mathcal{M})}{  \mathbb{E}_{S\cap\mathcal{M}}[(\textrm{Pf}(\Omega(S\cap\mathcal{M})))]},
\end{equation}
where $\Omega(S\cap\mathcal{M})$ is the curvature form of the submanifold $S\cap\mathcal{M}$.

If we had access to all the quantities in Equation \ref{eq:volume_preweight} our pre-weight would then be $\frac{1}{\textrm{Vol}_{d-k}(S\cap\mathcal{M})} = \frac{  \mathbb{E}_{S\cap\mathcal{M}}[(\textrm{Pf}(\Omega(S\cap\mathcal{M})))]}{(2\pi)^\frac{d-k}{2} \chi (S\cap\mathcal{M})}$.  However, as we shall see we cannot actually implement this pre-weight since some of these quantities represent higher-order information.  To make use of this weight to the best of our ability given only the second-order information, we must separate the higher-order components of the weight from the second-order components by dividing out the higher-order components.

The Euler characteristic is essentially a higher-order property, so it is not reasonable in general to try to estimate the Euler characteristic $\chi (S\cap\mathcal{M})$ using the second derivatives of $\mathcal{M}$ at $x$ because the local second order information gives us little if any information about $\chi (S\cap\mathcal{M})$ (although it may in theory be possible to say a bit more about the Euler characteristic if one has some prior knowledge of the manifold).  The best we can do at this point is to assume the Euler characteristic is a constant with respect to S, or more generally, statistically independent of $S$.

All that remains to be done is to estimate $\mathbb{E}_{S\cap\mathcal{M}}\textrm{Pf}(\Omega(S\cap\mathcal{M}))$.  We observe that
\begin{equation}
\mathbb{E}_{S\cap\mathcal{M}}\textrm{Pf}(\Omega(S\cap\mathcal{M}))= \mathbb{E}_{S\cap\mathcal{M}}|\textrm{Pf}(\Omega(S\cap\mathcal{M}))|  \times \frac{\mathbb{E}_{S\cap\mathcal{M}}\textrm{Pf}(\Omega(S\cap\mathcal{M}))}{\mathbb{E}_{S\cap\mathcal{M}}|\textrm{Pf}(\Omega(S\cap\mathcal{M}))|}.
\end{equation}
But the ratio $\frac{\mathbb{E}_{S\cap\mathcal{M}}\textrm{Pf}(\Omega(S\cap\mathcal{M}))}{\mathbb{E}_{S\cap\mathcal{M}}|\textrm{Pf}(\Omega(S\cap\mathcal{M}))|}$ is also a higher-order property since all it does is describe how much the second-order Chern-Gauss-Bonnet curvature form changes globally over the manifold, so in general we can say nothing about it using only the local second-order information.  The best we can do at this point is to assume that this ratio is statistically independent of $S$ as well.

Hence, we have:
\begin{multline}
\frac{1}{\textrm{Vol}_{d-k}(S\cap\mathcal{M})}=\mathbb{E}_{S\cap\mathcal{M}}|\textrm{Pf}(\Omega(S\cap\mathcal{M}))|  \times\\ ((2\pi)^\frac{d-k}{2} \chi (S\cap\mathcal{M}) \frac{\mathbb{E}_{S\cap\mathcal{M}}\textrm{Pf}(\Omega(S\cap\mathcal{M}))}{\mathbb{E}_{S\cap\mathcal{M}}|\textrm{Pf}(\Omega(S\cap\mathcal{M}))|}),
\end{multline}
where we lose nothing by dividing out the unknown quantity\\ $(2\pi)^m \chi (\mathcal{M}) \frac{\mathbb{E}_{S\cap\mathcal{M}}\textrm{Pf}(\Omega(S\cap\mathcal{M}))}{\mathbb{E}_{S\cap\mathcal{M}}|\textrm{Pf}(\Omega(S\cap\mathcal{M}))|}$ since we have no information about it and it is independent of $S$.
 
 We would therefore like to use $\mathbb{E}_{S\cap\mathcal{M}}|\textrm{Pf}(\Omega(S\cap\mathcal{M}))|$ as a pre-weight.  Since we only know the curvature form $\Omega(S\cap\mathcal{M})$ locally at $x$, our best estimate for $\mathbb{E}_{S\cap\mathcal{M}}|\textrm{Pf}(\Omega(S\cap\mathcal{M}))|$ is the absolute value $|\textrm{Pf}(\Omega_x(S\cap\mathcal{M})|$ of the Chern-Gauss-Bonnet curvature at $x$.  Hence, our best local second-order choice for the pre-weight is $\hat{w} = |\textrm{Pf}(\Omega_x(S\cap\mathcal{M})|$.

\subsection{Higher-order Chern-Gauss-Bonnet reweightings \label{sub:Higher-Order-reweighting}}
One may consider higher-order reweightings which attempt to guess not only the second-order local intersection volume, but also make a better guess for both the Euler characteristic of the intersection $S_Q \cap \mathcal{M}$, and how the curvature would vary over $S_Q \cap \mathcal{M}$.  Nevertheless, higher-order approximations are probably harder to implement for the same reason that most nonlinear solvers, such as Newton's method, do not use higher-order derivatives.

\subsection{Possible reweightings using Atiyah-Singer index theorem or other topological invariants}\label{sub:Atiyah-Singer_index_theorem}
One may also consider reweighting with respect to topological invariants of Riemannian manifolds other than the Chern-Gauss-Bonnet curvature.  For instance, it may be possible to reweight with respect to the integrand of the Atiyah-Singer index theorem \cite{Singer}, which is the product of the Chern-Gauss-Bonnet curvature form and another differential form associated with an elliptical partial differential equation (PDE) defined on $\mathcal{M}$.  The Atiyah-Singer index theorem says that the integral of the product of these two differential forms over $\mathcal{M}$ is equal to the product of $\chi(\mathcal{M})$ and another term that is invariant under continuos transformations of the PDE.  The idea would be to use a carefully chosen PDE, whose associated differential form attempts to "counterbalance" the curvature form: when the manifold's curvature is big, the PDE's differential form is small, and vice versa.  However, it remains to be shown whether such elliptical PDEs are easy to obtain for an implicitly defined manifold $\mathcal{M}$.

\subsection{Collection-of-spheres example and concentration of measure\label{sub:Sphere-example}}

In this section we argue that the traditional algorithm suffers from an exponential slowdown (exponential in the search-subspace dimension) unless we reweight the intersection volumes using Corollary 2.1 with the Chern-Gauss-Bonnet curvature weights.  We do so by applying two concentration of measure results, which we derive in \cite{Oren_Concentration_of_Measure}, to an example involving a collection of hyperspheres.

Consider a collection of very many hyperspheres in $\mathbb{R}^n$.  We wish to sample uniformly from these hyperspheres.  To do so, we imagine running a Markov chain with isotropically random search-subspaces.  We imagine that there are so many hyperspheres that a random search-subspace typically intersects exponentially many hyperspheres.  As a first step we would use Theorem \ref{th:1} which allows us to sample the intersected hypersphere from the uniform distribution on their intersection volumes.  While using Theorem \ref{th:1} should speed convergence somewhat (as discussed in Section \ref{sub:Naive-weights-vs.}), concentration of measure causes the intersections with the different hyperspheres to have very different volumes (Figure \ref{fig:ConcentrationOfMeasure_Euclidean}).  In fact we shall see that the variance of these volumes increases exponentially in $d$, causing an exponential slowdown if only Theorem \ref{th:1} is used, since the Metropolis subroutine would need to find exponentially many subspheres before converging.

Reweighting the intersection volumes using Theorem \ref{th:2} causes each random intersection $S\cap\mathcal{M}_i$ (where $\mathcal{M}_i$ is a subsphere) to have exactly the same reweighted intersection volume, regardless of the location where $S$ intersects $\mathcal{M}_i$, and regardless of $d$.  Hence, in this example, Theorem \ref{th:2} allows us to avoid the exponential slowdown in the convergence speed that would otherwise arise from the variance in the intersection volumes.

\begin{figure}[h]
\centering
\includegraphics[trim=0cm 0.3cm 0cm 0.6cm, clip=true, scale=0.25]{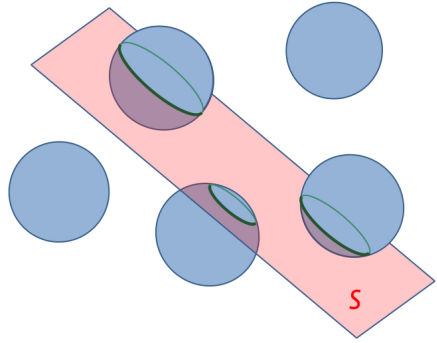}
\caption{The random search-subspace $S$ intersects a collection of spheres $\mathcal{M}$.  Even though the spheres in this example all have the same $n-1$-volume, the $d-1$-volume of the intersection of $S$ with each individual sphere (green circles) varies greatly depending on where $S$ intersects the sphere if $d$ is large.  In fact, the variance of the intersection volume of each intersected sphere increases exponentially with $d$.  This "curse of dimensionality" for the intersection volume variance leads to an exponential slowdown if we wish to sample from $S\cap \mathcal{M}$ with a Markov chain sampler (and $S\cap \mathcal{M}$  consists of exponentially many intersected spheres).  However, if we use the Chern-Gauss-Bonnet curvature to reweight the intersection volumes, then all spheres in this example will have exactly the same reweighed intersection volume, greatly increasing the convergence of the Markov chain sampler. \label{fig:ConcentrationOfMeasure_Euclidean}}
\end{figure}

The first result deals with the variance of the intersection volumes of a sphere in Euclidean space.  It says that the variance of the intersection volume, normalized by it's mean, increases exponentially with the dimension $d$ (as long as $d$ is not too close to $n$).  Although isotropically random search-subspaces are (conditional on the radial direction) distributed according to the Haar measure in spherical space, the Euclidean case is still of interest to us since it represents the limiting case when the hyperspheres are small, since spherical space is locally Euclidean.

\begin{thm}\label{th:3} (Concentration of Euclidean Kinematic Measure)\\
Let $S\subset\mathbb{R}^{n}$ be a random $d$-dimensional linear affine subspace distributed according to the Kinematic measure on $\mathbb{R}^{n}$.  Let $\mathcal{M}=\mathbb{S}^n\subset\mathbb{R}^{n}$ be the unit sphere in $\mathbb{R}^{n}$.  Defining $\alpha:=\frac{d}{n}$, we have
\begin{equation}\label{eq:euclidean_variance_concentration}
k(\alpha,d)e^{d \times \varphi(\alpha)} - 1 \leq \mathrm{Var}(\frac{\mathrm{Vol}(S\cap\mathcal{M})}{\mathbb{E}[\mathrm{Vol}(S\cap\mathcal{M})]}) \leq K(\alpha,d)e^{d \times \varphi(\alpha)} - 1,
\end{equation}
where
\begin{equation*}
\begin{split}
&\varphi(\alpha) = \log(2) + (\frac{1}{\alpha})\log(\frac{1}{\alpha}) - (\frac{1}{2\alpha}+\frac{1}{2})\log(\frac{1}{\alpha}+1) - (\frac{1}{2\alpha}-\frac{1}{2})\log(\frac{1}{\alpha}-1)\\
&k(\alpha,d)=(\frac{(2\pi)^{\frac{3}{2}}}{e^4})(n-d)^2\sqrt{\frac{(n-1)(\frac{n}{d}-1)}{(d-1)(n+d-2)}} \times e^{- 1   -\frac{1+\alpha}{1+\alpha-\frac{2}{d}}}\\
&K(\alpha,d) = (\frac{e^3}{4\pi^2})(n-d)^2 \sqrt{\frac{(n-1)(\frac{n}{d}-1)}{(d-1)(n+d-2)}} \times e^{-\frac{n}{n-1} + 1}.
\end{split}
\end{equation*}
\end{thm}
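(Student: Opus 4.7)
The plan is to use the rotational invariance of $\mathcal{M}$ to reduce the variance calculation to a one-dimensional Beta integral, and then extract the exponential rate $\varphi(\alpha)$ by means of non-asymptotic Stirling bounds. First, the kinematic measure on $d$-planes in $\mathbb{R}^{n}$ factors as the Haar measure on the Grassmannian times Lebesgue measure on the orthogonal complement, so the rotational symmetry of the unit sphere lets me freeze the orientation of $S$ and parametrize by the single variable $r:=\mathrm{dist}(S,0)$. The radial factor of Lebesgue measure on $\mathbb{R}^{n-d}$, conditioned on the event $\{r<1\}$ that $S$ meets $\mathcal{M}$, has density $f(r)=(n-d)r^{n-d-1}$ on $[0,1]$, and the intersection itself is a round $(d-1)$-sphere of radius $\sqrt{1-r^{2}}$, so
\[
V(r)=\mathrm{Vol}(\mathbb{S}^{d-1})(1-r^{2})^{(d-1)/2}.
\]

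Substituting $u=r^{2}$ turns $\mathbb{E}[V^{p}]$ for $p\in\{1,2\}$ into the explicit Beta integrals $\frac{n-d}{2}\,\mathrm{Vol}(\mathbb{S}^{d-1})^{p}\, B\!\left(\frac{n-d}{2},\frac{p(d-1)}{2}+1\right)$, so the normalized second moment collapses to a pure ratio of five Gamma functions:
\[
\mathrm{Var}\!\left(\frac{V}{\mathbb{E}[V]}\right)+1=\frac{2\,\Gamma(d)\,\Gamma\!\left(\frac{n+1}{2}\right)^{2}}{(n-d)\,\Gamma\!\left(\frac{n-d}{2}\right)\Gamma\!\left(\frac{d+1}{2}\right)^{2}\Gamma\!\left(\frac{n+d}{2}\right)}.
\]
I would then apply the Robbins form of Stirling's approximation, $\Gamma(x)=\sqrt{2\pi}\,x^{x-1/2}e^{-x}\,e^{\theta(x)}$ with $\frac{1}{12x+1}\le\theta(x)\le\frac{1}{12x}$, to each of the five Gamma factors. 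After substituting $d=\alpha n$, a direct expansion shows that all the linear-in-$x$ Stirling exponents cancel, and that the $x\log x$ contributions, after extracting the $\log 2$ pieces and using the identity $\frac{n\pm d}{2}\log\frac{n\pm d}{2}=\frac{n(1\pm\alpha)}{2}[\log n-\log 2+\log(1\pm\alpha)]$, leave all $\log n$ dependence vanishing; the surviving contribution is precisely $d\bigl[\log 2-\frac{1+\alpha}{2\alpha}\log(1+\alpha)-\frac{1-\alpha}{2\alpha}\log(1-\alpha)\bigr]=d\,\varphi(\alpha)$.

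The polynomial prefactor $\sqrt{(n-1)(n/d-1)/[(d-1)(n+d-2)]}$ then drops out by combining the $x^{-1/2}$ pieces of Stirling with the explicit $\frac{2}{n-d}$ factor, once the Robbins formula is evaluated at the nearby integer or half-integer arguments $d-1,\frac{n-1}{2},\frac{n+d-2}{2}$. The scalar exponentials $e^{-1-(1+\alpha)/(1+\alpha-2/d)}$ and $e^{-n/(n-1)+1}$ in $k$ and $K$ come from extremising the five Robbins remainders $\theta(x)/(12x)$ simultaneously: the lower remainders on the numerator $\Gamma$'s and the upper remainders on the denominator $\Gamma$'s produce the lower bound $k$, and the reverse choice produces $K$. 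I expect the main obstacle to be precisely this last step — carefully matching Robbins remainders so that they collapse into the stated closed forms, and confirming that the combination stays non-trivial across the range of $\alpha$, especially near $\alpha\to 1$ where $n-d\downarrow 0$ makes the $\frac{2}{n-d}$ prefactor delicate but is controlled by $\varphi(\alpha)\to 0$ through a matching cancellation.
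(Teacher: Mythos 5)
The paper itself contains no proof of Theorem \ref{th:3}: it is quoted from the companion reference \cite{Oren_Concentration_of_Measure}, so your derivation is the only argument available for comparison, and its core is correct. The reduction of the kinematic measure to the single radial variable $r$ with density $(n-d)r^{n-d-1}$ on $[0,1]$ after conditioning on $S\cap\mathcal{M}\neq\emptyset$, the formula $V(r)=\mathrm{Vol}(\mathbb{S}^{d-1})(1-r^{2})^{(d-1)/2}$, the Beta-integral evaluation of the first two moments, and the resulting identity
\[
\mathrm{Var}\!\left(\frac{V}{\mathbb{E}[V]}\right)+1=\frac{2\,\Gamma(d)\,\Gamma\!\left(\tfrac{n+1}{2}\right)^{2}}{(n-d)\,\Gamma\!\left(\tfrac{n-d}{2}\right)\Gamma\!\left(\tfrac{d+1}{2}\right)^{2}\Gamma\!\left(\tfrac{n+d}{2}\right)}
\]
all check out (at $n=4$, $d=2$ both the formula and direct integration give $9/8$). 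Your Stirling bookkeeping for the exponential rate is also right: the linear terms cancel, the $\log n$ terms cancel, and the surviving contribution is exactly $d\,\varphi(\alpha)$.

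The gap is in the last step, and it is not the one you anticipate. Carrying the $(x-\tfrac12)\log x$ and $\tfrac12\log(2\pi)$ pieces of Stirling through all five Gamma factors gives the sharp asymptotic
\[
\mathrm{Var}\!\left(\frac{V}{\mathbb{E}[V]}\right)+1=\bigl(1+o(1)\bigr)\sqrt{\frac{n+d}{2\pi d(n-d)}}\;e^{d\varphi(\alpha)},
\]
so the true polynomial prefactor decays like $n^{-1/2}$ at fixed $\alpha$. The printed prefactor $(n-d)^{2}\sqrt{\tfrac{(n-1)(n/d-1)}{(d-1)(n+d-2)}}$ grows polynomially relative to this, and the Robbins remainders you plan to extremise contribute only multiplicative factors trapped between fixed constants; they cannot absorb a discrepancy that grows with $n$. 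Concretely, at $n=20$, $d=10$ the exact value of the ratio is $16.26$, i.e. $\mathrm{Var}=15.26$, while $k(\tfrac12,10)\,e^{10\varphi(1/2)}-1\approx 0.919\times 74.83-1\approx 67.8$: the stated lower bound exceeds the true variance. So either the constants in the theorem as printed contain an error (the factor $(n-d)^{2}$ sitting outside the square root is the likely culprit), or the intended normalization of $\mathrm{Vol}(S\cap\mathcal{M})$ differs from the natural one we are both using. You should prove the clean statement your computation actually supports --- explicit Robbins upper and lower envelopes of the exact Gamma ratio, with prefactor of order $\sqrt{\tfrac{n+d}{2\pi d(n-d)}}$ --- and flag the discrepancy with the printed $k$ and $K$, rather than trying to force your estimates into that form.
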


The next result (Figure \ref{fig:ConcentrationOfMeasure_Spherical}) deals with the spherical geometry case.    As in the Euclidean case, the spherical concentration result says that the variance of the intersection volume increases exponentially with the dimension $d$ as well.  (While we were able to derive the analytical expression for the probability distribution of the intersection volumes, which we used to generate the plot in Figure \ref{fig:ConcentrationOfMeasure_Spherical} showing an exponential increase in variance, we have not yet finished deriving a formal inequality analogous to Theorem \ref{th:3} for the spherical geometry case.  We hope to make the analogous result available soon in \cite{Oren_Concentration_of_Measure})

\begin{figure}[h]
\centering
\includegraphics[scale=0.2]{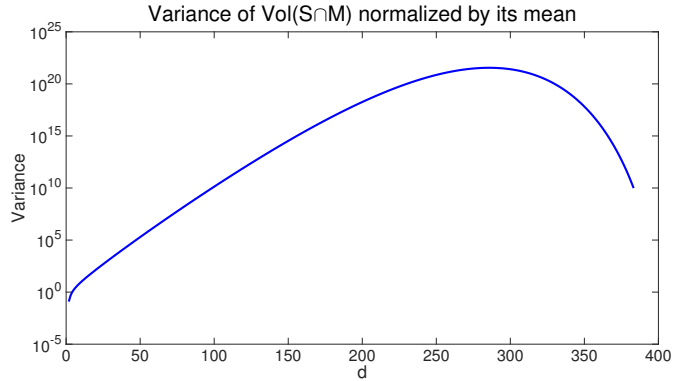}

\caption{This log-scale plot (from \cite{Oren_Concentration_of_Measure})  shows the variance of $\mathrm{Vol}(S\cap\mathcal{M})$ normalized by its mean, when $S$ is a Haar-measure distributed d-dimensional great subsphere of $\mathbb{S}^n$, for different values of $d$, where $n=400$. $\mathcal{M}$ is taken to be the boundary of a spherical cap of $\mathbb{S}^n$ with geodesic radius such that $S$ has a  $10\%$ probability of intersecting $\mathcal{M}$.  The variance increases exponentially with the dimension $d$ of the search-subspace (as long as $d$ is not too close to $n$), leading to an exponential slow-down in the convergence for the traditional Metropolis-within-Gibbs algorithm applied to the collection-of-spheres example of Section \ref{sub:Sphere-example}.  Reweighting the intersection volumes with the Chern-Gauss-Bonnet curvature using Corollary 2.1 in this situation (where $\mathcal{M}$ is a subsphere) causes each random intersection $S\cap\mathcal{M}$ to have exactly the same intersection volume regardless of $d$, allowing us to avoid the exponential slowdown in the convergence speed that would otherwise arise from the variance in the intersection volumes.\label{fig:ConcentrationOfMeasure_Spherical}}
\end{figure}

\subsection{Theoretical bounds derived using Theorem \ref{th:2} and algebraic geometry\label{sub:Algebraic-Geometry}}

Generalizing on bounds for lower-dimensional algebraic manifolds based on the Cauchy-Crofton formula (such as the bounds for tubular neighborhoods in \cite{Tubular_Neighborhoods} and \cite{Guth}), it is also possible to use Theorem \ref{th:2} to get a bound for the volume of an algebraic manifold $\mathcal{M}$ of given degree $s$, as long as one can also use analytical arguments to bound the second-order Chern-Gauss-Bonnet curvature reweighting factor on $\mathcal{M}$ for some convenient search-subspace dimension $d$:

\begin{Corollary}[2.2]
Let $\mathcal{M}\subset\mathbb{R}^{n}$
be an algebraic manifold of degree $s$ and codimension 1, such that
$\mathbb{E}_Q[|\textrm{Pf}(\Omega_x(S_Q \cap\mathcal{M})|\times \det(\mathrm{Proj}_{\mathcal{M}_x^\perp} Q)]\geq b$
for every $x\in\mathcal{M}$, and the conditions of Corollary 2.1 are satisfied if we set $\hat{w}(x;S) = |\textrm{Pf}(\Omega_x(S\cap\mathcal{M})|$. Then 

\textup{
\begin{equation}
\textrm{Vol}(\mathcal{M})\leq \frac{1}{c_{d,k,n,\mathbb{R}}} \times \frac{1}{b} \times\frac{s\times(s-1)^{d}}{2}\textrm{Vol}(\mathbb{S}^{n}).
\end{equation}
}
\end{Corollary}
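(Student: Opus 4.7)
The plan is to combine Corollary 2.1, applied with pre-weight $\hat{w}(x;S)=|\mathrm{Pf}(\Omega_x(S\cap\mathcal{M}))|$ (whose admissibility is part of the hypothesis), with a Bezout-type bound on the total absolute Gauss curvature of a real algebraic hypersurface. Since $S$ is a plane we set $\mathrm{Vol}_d(S)=1$, and Corollary 2.1 gives
\begin{equation*}
\mathrm{Vol}(\mathcal{M}) = \mathbb{E}_S\bigg[\int_{S\cap\mathcal{M}} \frac{1}{c_{d,1,n,\mathbb{R}}}\cdot\frac{|\mathrm{Pf}(\Omega_x(S\cap\mathcal{M}))|}{\mathbb{E}_Q[|\mathrm{Pf}(\Omega_x(S_Q\cap\mathcal{M}))|\det(\mathrm{Proj}_{\mathcal{M}_x^\perp}Q)]}\,\mathrm{d}\mathrm{Vol}_{d-1}\bigg].
\end{equation*}
Using the hypothesis that the denominator is at least $b$ yields the upper bound
\begin{equation*}
\mathrm{Vol}(\mathcal{M}) \leq \frac{1}{b\,c_{d,1,n,\mathbb{R}}}\,\mathbb{E}_S\bigg[\int_{S\cap\mathcal{M}} |\mathrm{Pf}(\Omega_x(S\cap\mathcal{M}))|\,\mathrm{d}\mathrm{Vol}_{d-1}\bigg].
\end{equation*}

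Next I would bound the inner integral uniformly in $S$. Fix a generic $d$-plane $S$, identify it with $\mathbb{R}^d$, and write $S\cap\mathcal{M}=\{p_S=0\}$ where $p_S$ is the restriction to $S$ of the degree-$s$ polynomial defining $\mathcal{M}$; then $S\cap\mathcal{M}$ is a real algebraic hypersurface of degree at most $s$ in $\mathbb{R}^d$. Since $\mathcal{M}$ is a hypersurface, the identification in Section \ref{sec:GaussBonnet} gives that $|\mathrm{Pf}(\Omega_x(S\cap\mathcal{M}))|$ is the absolute Jacobian of the Gauss map $G:S\cap\mathcal{M}\to\mathbb{S}^{d-1}$, so by the area formula
\begin{equation*}
\int_{S\cap\mathcal{M}} |\mathrm{Pf}(\Omega_x(S\cap\mathcal{M}))|\,\mathrm{d}\mathrm{Vol}_{d-1} = \int_{\mathbb{S}^{d-1}} \#\,G^{-1}(v)\,\mathrm{d}\mathrm{Vol}(v).
\end{equation*}

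The key algebraic-geometric step is Bezout. For a regular value $v\in\mathbb{S}^{d-1}$ of $G$, the set $G^{-1}(v)\cup G^{-1}(-v)$ is exactly the set of critical points on $S\cap\mathcal{M}$ of the linear functional $\langle v,\cdot\rangle$, which satisfy the polynomial system consisting of $p_S(x)=0$ (degree $s$) together with the $d-1$ parallelism conditions $v_i\partial_j p_S-v_j\partial_i p_S=0$ (each of degree $s-1$). Affine Bezout bounds the number of real solutions by $s(s-1)^{d-1}$, and the antipodal symmetry of $\mathbb{S}^{d-1}$ gives
\begin{equation*}
\int_{\mathbb{S}^{d-1}} \#\,G^{-1}(v)\,\mathrm{d}\mathrm{Vol}(v) \leq \tfrac{1}{2}\,s(s-1)^{d-1}\,\mathrm{Vol}(\mathbb{S}^{d-1}).
\end{equation*}
Substituting back and converting $\mathrm{Vol}(\mathbb{S}^{d-1})$ into $\mathrm{Vol}(\mathbb{S}^n)$ using the explicit form of the Euclidean Cauchy-Crofton constant $c_{d,1,n,\mathbb{R}}$ from \cite{Santalo} yields the claimed inequality.

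The main obstacle is the constant bookkeeping: matching the Bezout exponent and the sphere-volume conversion so that the right-hand side takes the precise form $\frac{s(s-1)^d}{2}\mathrm{Vol}(\mathbb{S}^n)$ as stated --- tightening or slightly loosening the Bezout estimate (for example, absorbing an extra factor of $s-1$ from points at infinity in a projective formulation) may be needed. A secondary technical issue is that the singular locus of $S\cap\mathcal{M}$ and the set of non-regular values of $G$ might a priori carry the integrands, but both have measure zero by Sard's theorem and therefore contribute nothing to the integrals above.
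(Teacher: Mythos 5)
Your proposal is correct and follows essentially the same route as the paper: apply Theorem 2 / Corollary 2.1 with the curvature pre-weight, replace the denominator by its lower bound $b$, and then bound $\mathbb{E}_S\bigl[\int_{S\cap\mathcal{M}}|\mathrm{Pf}(\Omega_x(S\cap\mathcal{M}))|\,\mathrm{d}\mathrm{Vol}_{d-1}\bigr]$ uniformly via a Bezout-type bound on the total absolute Gauss curvature of the degree-$s$ hypersurface $S\cap\mathcal{M}\subset S\cong\mathbb{R}^d$. The only difference is that the paper simply cites this last step as Risler's bound (with the constant $\tfrac{1}{2}s(s-1)^{d}\mathrm{Vol}(\mathbb{S}^{n})$ appearing as stated), whereas you re-derive it through the Gauss map and affine Bezout; the constant mismatch you flag ($\tfrac{1}{2}s(s-1)^{d-1}\mathrm{Vol}(\mathbb{S}^{d-1})$ versus the paper's form) is a discrepancy in the paper's quoted constant, not a gap in your argument.
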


\begin{proof}
If we have an algebraic manifold of degree $s$ in $\mathbb{R}^{n}$,
by Bezout's theorem the intersection with an arbitrary plane is also
degree $s$. Hence (at least in the case where $\mathcal{M}$ has
codimension 1), we can use Risler's bound to bound the integral of
the absolute value of the Gaussian curvature over $S\cap\mathcal{M}$
by $a:=\frac{s\times(s-1)^{d}}{2}\textrm{Vol}(\mathbb{S}^{n})$ \cite{Total_Absolute_Curvature_1,Total_Absolute_Curvature_2}.

By Theorem \ref{th:2},

\begin{equation*}
\begin{split}
&\textrm{Vol}(\mathcal{M}) = \mathbb{E}_S[ \frac{\textrm{Vol}_d(S)}{c_{d,k,n,\mathbb{K}}}
\times \frac{|\textrm{Pf}(\Omega_x(S\cap\mathcal{M})|}{\mathbb{E}_Q[|\textrm{Pf}(\Omega_x(S_Q\cap\mathcal{M})|\times \det(\mathrm{Proj}_{\mathcal{M}_x^\perp} Q)]}\mathrm{d}\textrm{Vol}{}_{d-k}] \\
&\leq \frac{1}{c_{d,k,n,\mathbb{R}}} \times \mathbb{E}_S[ |\textrm{Pf}(\Omega_x(S\cap\mathcal{M})|]\times \frac{1}{b} \leq \frac{1}{c_{d,k,n,\mathbb{R}}} \times a \times \frac{1}{b}.
\end{split}
\end{equation*}
\end{proof}

Unlike a bound derived using only the Cauchy-Crofton formula for point
intersections, the bound in Corollary 2.2 allows us to incorporate additional information
about the curvature, so we suspect that this bound will be much stronger in situations where the curvature does not vary too much in most directions over the manifold.  We hope to investigate examples of such manifolds in the future where we suspect Corollary 2.2 will provide stronger bounds, but do not pursue these examples here because it is beyond the scope of this paper.

\part{Numerical simulations}

\section{Random matrix application: Sampling the stochastic airy operator\label{sec:Random-Matrix-Application}}

Oftentimes, one would like to know the distribution of the largest
eigenvalues of a random matrix in the large-n limit, for instance
when performing principal component analysis \cite{Johnstone}. For a large class
of random matrices that includes the Gaussian orthogonal/unitary/symplectic
ensembles, and more generally the beta-ensemble point processes, the joint distribution of the largest eigenvalues converges in the large-$n$ limit, after rescaling,
to the so-called hard-edge limiting distribution (The single-largest eigenvalue's
limiting distribution is the well-known Tracy-Widom distribution) \cite{Johnstone,Sutton_Thesis,Stochastic_Operators,RRV}.
One way to learn about these distributions is to generate samples
from certain large matrix models. One such matrix model that converges
particularly fast to the large-n limit is the tridiagonal matrix discretization
of the Stochastic Airy operator of Edelman and Sutton \cite{Sutton_Thesis,Stochastic_Operators},
\begin{equation}
\frac{\mathrm{d}^{2}}{\mathrm{d}x^{2}}-x+\frac{2}{\sqrt{\beta}}\mathrm{d}W\label{eq:Stochastic-Operator},
\end{equation}
where $dW$ is the white noise process.  We wish to study the distributions of eigenvalues of the hard edge
conditioned on other eigenvalue(s) or eigenvector statistics.

To obtain samples from these conditional distributions, we can use Algorithm \ref{alg:Great-Sphere-Metropolis}, which is straightforward to apply in
this case since $dW$ is already discretized
as iid Gaussians.

The stochastic Airy operator (\ref{eq:Stochastic-Operator}) can be
discretized as the tridiagonal matrix \cite{Stochastic_Operators,Sutton_Thesis}

\begin{equation}
A_{\beta}=\frac{1}{h^{2}}\Delta-h\times\textrm{diag}(1,2,...,k)+\frac{2}{h\sqrt{\beta}}N,\label{eq:8}
\end{equation}
where $\Delta=\left[\begin{array}{rrrrrr}
-2 & 1\\
1 & -2 & 1\\
 & 1 & -2 & 1\\
 &  &  & ...\\
 &  &  & 1 & -2 & 1\\
 &  &  &  & 1 & -2
\end{array}\right]$ is the $k\times k$ discretized Laplacian, $N\sim\textrm{diag}\mathcal{\mathrm{(}N}(0,1)^{n})$
is a vector of independent standard normals, and the cutoff $k$
is chosen (as in \cite{Stochastic_Operators,Sutton_Thesis}) to be
$k=10n^{-\frac{1}{3}}$ (the $O(10n^{-\frac{1}{3}})$ cutoff is due
to the decay of the eigenvectors corresponding to the largest eigenvalues,
which decay like the Airy function, causing only the first $O(10n^{-\frac{1}{3}})$ entries to be computationally significant).

\subsection{Approximate sampling algorithm implementation\label{sub:Approximate-sampling-algorithm-implementation}}

Since the discretized stochastic Airy operator $A_{\beta}$ is already
explicitly a function of the iid Gaussians $\frac{2}{h\sqrt{\beta}}N$
(Equation \ref{eq:8}), we can use Algorithm \ref{alg:Great-Sphere-Metropolis}
to sample $A_{\beta}$ conditional on our eigenvalue constraints of
interest. To simplify the algorithm we can use a deterministic
nonlinear solver with random starting points in place of the nonlinear solver-based MCMC ``Metropolis'' subroutine of Algorithm \ref{alg:Great-Sphere-Metropolis}  to get
an approximate sampling (Algorithm 2.1, 
below). This is somewhat analogous
to setting both the hot and cold baths in a simulated annealing-based (see, for instance, \cite{MCMC_problems2}) "Metropolis step" in a Metropolis-within-Gibbs algorithm to zero
temperature, since we are setting the randomness of the Metropolis subroutine to zero while fully retaining the randomness of the search-subspaces.
\addtocounter{algorithm}{-2}\renewcommand{\thealgorithm}{\arabic{algorithm}.1}

\begin{algorithm}[H]
\caption{Integral Geometry reweighted one-iteration ``Deterministic Nonlinear
Solver-within-Gibbs'' for sampling from Gaussians\label{alg:Great-Sphere-Deterministic}}

Steps 1-4 are the same as in Algorithm \ref{alg:Great-Sphere-Metropolis}.

\begin{enumerate}[5.]
\item for $i=1$ to $i_{\textrm{max}}$
\begin{enumerate}[(a)]
\item Generate a random isotropic $d$-dimensional linear search-subspace $S_i$ centered at the origin.  Generate a sphere $r\mathbb{S}^n$ centered at the origin with random radius $r$ distributed according to the $\chi_n$ distribution.
\item Use a deterministic nonlinear solver with random starting point to find a point $x_i$ on the intersection $S_{i+1}\cap\mathcal{M}\cap r\mathbb{S}^n$.
\item compute $\psi(x_{i})$ and the weight $w(x_{i})=\frac{f(x_i)}{|\nabla\lambda|_{r\mathbb{S}^n}(x_i)|}$.
\end{enumerate}
\end{enumerate}
\begin{enumerate}[7.]
\item Output: Weighted samples $\{x_{i}\}_{i=1}^{i_{\textrm{max}}}$
with associated weights $\{w(x_{i})\}_{i=1}^{i_{\textrm{max}}}$ that
are independent and approximately distributed according to the conditional density $f|\{\lambda(x)=c\}$, where $f(x)=\frac{1}{\sqrt{(2\pi)^{n}}}e^{-\frac{1}{2}x^{T}x}$
is the density of iid standard normals, from which we can obtain $\psi(x_{1}),\psi(x_{2}),...,\psi(x_{i_{\textrm{max}}})$
(and compute statistics of $\psi$, such as the weighted sample mean
$\frac{1}{\sum w(x_{i})}\sum w(x_{i})\times\psi(x_{i})$, the weighted
sample variance, or the weighted histogram of $\psi$)
\end{enumerate}
\end{algorithm}

\renewcommand{\thealgorithm}{\arabic{algorithm}}

\begin{rem}\label{rem:Deterministic_Approximation}
Using a deterministic solver with random starting point (Algorithm 2.1
) in place of the more random nonlinear solver-based Metropolis Markov chain subroutine of Algorithm \ref{alg:Great-Sphere-Metropolis} introduces some bias in the samples,
since the nonlinear solver probably will not find each point in the intersection $S_{i+1}\cap\mathcal{M}\cap r\mathbb{S}^n$
with equal probability.  There is nothing preventing us from using a more random Markov chain in place of the deterministic
solver, which one would normally do. However, since we only wanted
to compare weighting schemes, we can afford to use a more deterministic
solver in order to simplify numerical implementation for the time
being, as the implementation of the ``Metropolis'' step would be
beyond the scope of this paper.
It is important to note that this bias is \emph{not} a failure of
the reweighting scheme, but rather just a consequence of using a purely
deterministic solver in place of the ``Metropolis'' step. On the contrary,
we will see in Sections \ref{sec:Conditioning-on-multiple-eigenvalues} and \ref{sec:Conditioning-on-single-eigenvalue} that this bias is in fact much \emph{smaller}
than the bias present when the traditional weighting scheme
is used together with the same deterministic solver.  In the future, we plan to also perform
numerical simulations with a random Metropolis step in place of the
deterministic solver, as described in Algorithm \ref{alg:Great-Sphere-Metropolis}.
\end{rem}

\subsection{Comparison to rejection sampling \label{sub:Comparison-to-rejection-sampling}}

In this section we briefly explain why rejection sampling is oftentimes too slow when sampling from the stochastic Airy operator, implying that there is a need for a faster algorithm like Algorithm 2.1
.  Rejection sampling is slow if we condition on many eigenvalues at
a time, or if we want to condition on a large deviation of even a
single eigenvalue, since in both cases we are conditioning on rare events that occur with very low probability. In the large deviation case even the event that
the largest eigenvalue is bigger than some value (as opposed to exactly
equal to some value) is  nearly a lower-dimensional manifold, since
the eigenvalue distributions have Gaussian tails, which decay very
quickly.  For example, when we have Gaussian tails, for large $t$ the event $\{\lambda_1\geq t\}$ is approximately equal in probability to an event $\{\lambda_1\in[t,t+\epsilon)\}$ where $\epsilon\downarrow0$ as $t\rightarrow \infty$.  Hence, $\{\lambda_1\in[t,t+\epsilon)\}$ converges to the lower-dimensional manifold $\{\lambda_1=t\}$ as $t\rightarrow \infty$.

\section{Conditioning on multiple eigenvalues\label{sec:Conditioning-on-multiple-eigenvalues}}

In the first simulation (Figure \ref{fig: 6_eigenvalues}), we sampled
the fourth-largest eigenvalue conditioned on the remaining 1st- through
7th- largest eigenvalues. We begin with this example since in this
particular situation, when conditioned only on the 3rd and 5th eigenvalues,
the 4th eigenvalue is not too strongly dependent on the other eigenvalues
(the intuition for this reasoning comes from the fact that the eigenvalues
behave as a system of repelling particles with only week repulsion,
so the majority of the interaction involves the immediate neighbors
of $\lambda_{4}$). Hence, in this situation, we are able to test
the accuracy of the local solver approximation by comparison to brute
force rejection sampling. Of course, in a more general situation
where we do not have these relatively week conditional dependencies,
rejection sampling would be prohibitively slow (e.g., even if we allow
a 10\% probability interval for each of the six eigenvalues, conditioning
on all six eigenvalues gives a box that would be rejection sampled with probability $10^{-6}$).

Despite the fact that the integral geometry algorithm is solving for 6 different eigenvalues
simultaneously, the conditional probability density histogram obtained
using Algorithm 2.1 
with the integral geometry weights (Figure
\ref{fig: 6_eigenvalues}, blue) agrees closely with the conditional
probability density histogram obtained using rejection sampling (Figure
\ref{fig: 6_eigenvalues}, black). Weighting the exact same data points
obtained with Algorithm 2.1 
with the traditional weights
instead yields a probability density histogram (Figure \ref{fig: 6_eigenvalues},
red) that is much more skewed to the right than either the black or
blue curves. This is probably because, while theoretically unbiased, the traditional weights greatly amplify a small bias in the nonlinear
solver's selection of intersection points.

\begin{figure}[h]
\centering
\includegraphics[trim=0cm 3cm 0cm 0.5cm, width=11cm]{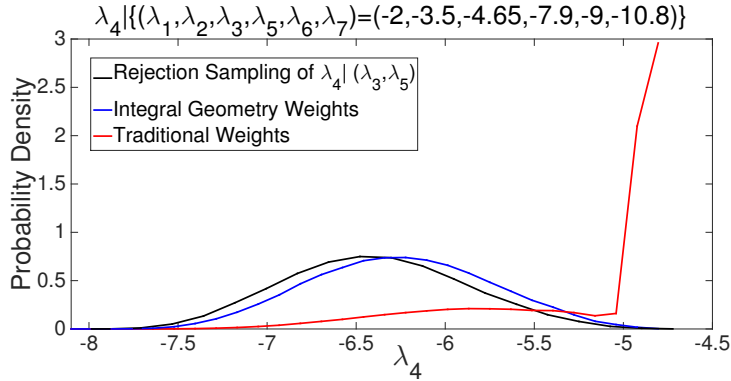}

\caption{In this simulation we used  Algorithm 2.1 
together with both the traditional weights (red) and
the integral geometry weights (blue) to plot the histogram of $\lambda_{4}|(\lambda_{1,}\lambda_{2},\lambda_{3},\lambda_{5},\lambda_{6},\lambda_{7})=(-2,-3.5,-4.65,-7.9,-9,-10.8)$
. We also provided a histogram obtained using rejection sampling of
the approximated conditioning $\lambda_{4}|(\lambda_{3,}\lambda_{5})\in[-4.65\pm0.001]\times[-7.9\pm0.001]$
(black) for comparison (conditioning on all six eigenvalues would
have caused rejection sampling to be much too slow). Since we used
a deterministic solver in place of the Metropolis subroutine in Algorithm
2, some bias is expected for both reweighting schemes. Despite this,
we see that the integral geometry histogram agrees closely with the
approximated rejection sampling histogram, but the traditional weights lead
to an extremely skewed histogram. This is probably because, while
theoretically unbiased, the traditional weights greatly amplify
a small bias in the nonlinear solver's selection of intersection points.
The skewness is especially large (in comparison to Figure \ref{fig:large_deviation})
because we are conditioning on 6 eigenvalues simultaneously.\label{fig: 6_eigenvalues}}
\end{figure}

\section{Conditioning on a single-eigenvalue rare event\label{sec:Conditioning-on-single-eigenvalue}}

In this set of simulations (Figure \ref{fig:large_deviation}), we sampled the second-largest eigenvalue
conditioned on the largest eigenvalue being equal to -2, 0, 2, and
5. Since $\lambda_{1}=5$ is a very rare event, we do not have any reasonable
chance of finding a point in the intersection of the codimension 1
constraint manifold $\mathcal{M} = \{\lambda_1=5\}$ with the search-subspace unless
we use a search-subspace of dimension $d>>1$.  Indeed, the analytical
solution for $\lambda_{1}$
tells us that $\mathbb{P}(\lambda_{1}\geq2)=1\times10^{-4}$, $\mathbb{P}(\lambda_{1}\geq4)=5\times10^{-8}$and
$\mathbb{P}(\lambda_{1}\geq5)<8\times10^{-10}$ \cite{TW_code_paper,TW_lookup_table_paper}.  For this same reason, rejection sampling for $\lambda_{1}=2$ is very slow (58
sec./sample vs. 0.25 sec./sample for Algorithm 2.1
) and we cannot hope to
perform rejection sampling for $\lambda_{1}=5$ (It would have taken about
84 days to get a single sample!). To allow us to make a histogram
in a reasonable amount of time, we will use Algorithm 2.1 
with search-subspaces of dimension
$d=23>>1$, vastly increasing the probability of the random search
subspace intersecting $\mathcal{M}$.

In (Figure \ref{fig:large_deviation}, top), we
see that while the rejection sampling (black) and integral geometry weight
(blue) histograms of the density of $\lambda_{2}|\lambda_{1}=2$
are fairly close to each other, the plot obtained with the exact same
data as the blue plot but weighted in the traditional way (red)
is much more skewed to the right and less smooth than both the black
and blue curves, implying that using the integral geometry weights from Theorem \ref{th:1} greatly
reduces bias and increases the convergence speed (Although the red curve is not as
skewed as in Figure \ref{fig:large_deviation} of Section \ref{sec:Conditioning-on-multiple-eigenvalues}.
This is probably because in this situation the codimension of $\mathcal{M}$
is 1, while in Section \ref{sec:Conditioning-on-multiple-eigenvalues}
the codimension was $6$.)

In (Figure \ref{fig:large_deviation}, middle), where we conditioned instead on $\lambda_1=5$, we
see that solving from a random starting point but not restricting
oneself to a random search-subspace (purple plot) causes huge errors
in the histogram of $\lambda_{2}|\lambda_{1}$. We also see that,
as in the case of $\lambda_{1}=2$, the plot of $\lambda_{2}$ obtained
with the traditional weights is much more skewed to the right
and less smooth than the plot obtained using the integral geometry
weights.

In (Figure \ref{fig:large_deviation}, bottom), we use our Algorithm 2.1 
to study the behavior of $\lambda_2|\lambda_1$ for values of $\lambda_1$ at which it would be difficult to obtain accurate curves with traditional weights or rejection sampling.  We see that as we
move $\lambda_{1}$ to the right, the variance of $\lambda_{2}|\lambda_{1}$
increases and the mean shifts to the right. One explanation for this
is that the largest and third-largest eigenvalues normally repel the
second-largest eigenvalue, squeezing it between the largest- and third-
largest eigenvalues, which reduces the variance of $\lambda_{2}|\lambda_{1}$.
Hence, moving the largest eigenvalue to the right effectively
"decompresses" the probability density of the second-largest eigenvalue, increasing it's variance.  Moving the largest eigenvalue to the right also allows the second-largest eigenvalue's mean to move to the right by reducing the repulsion from the right caused by the largest eigenvalue.




\begin{figure}
\centering
\includegraphics[trim=0cm 0.3cm 0cm 2cm, clip=true,width=13cm]{large_deviations_6a_squished-eps-converted-to.pdf}
\includegraphics[trim=0cm 5cm 0cm 6cm, clip=true,width=13cm]{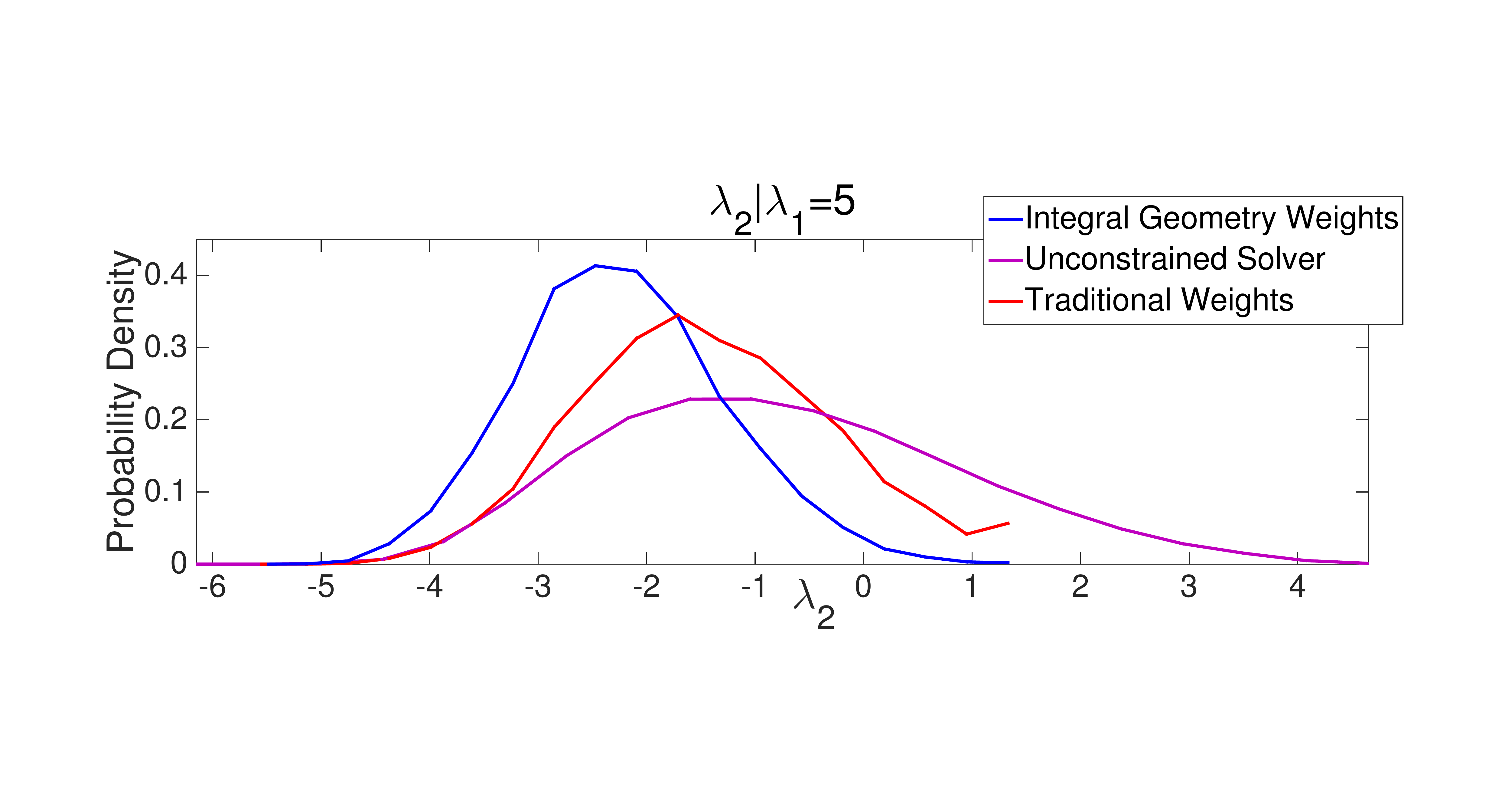}
\includegraphics[trim=0cm 5cm 0cm 5cm, clip=true,width=13cm]{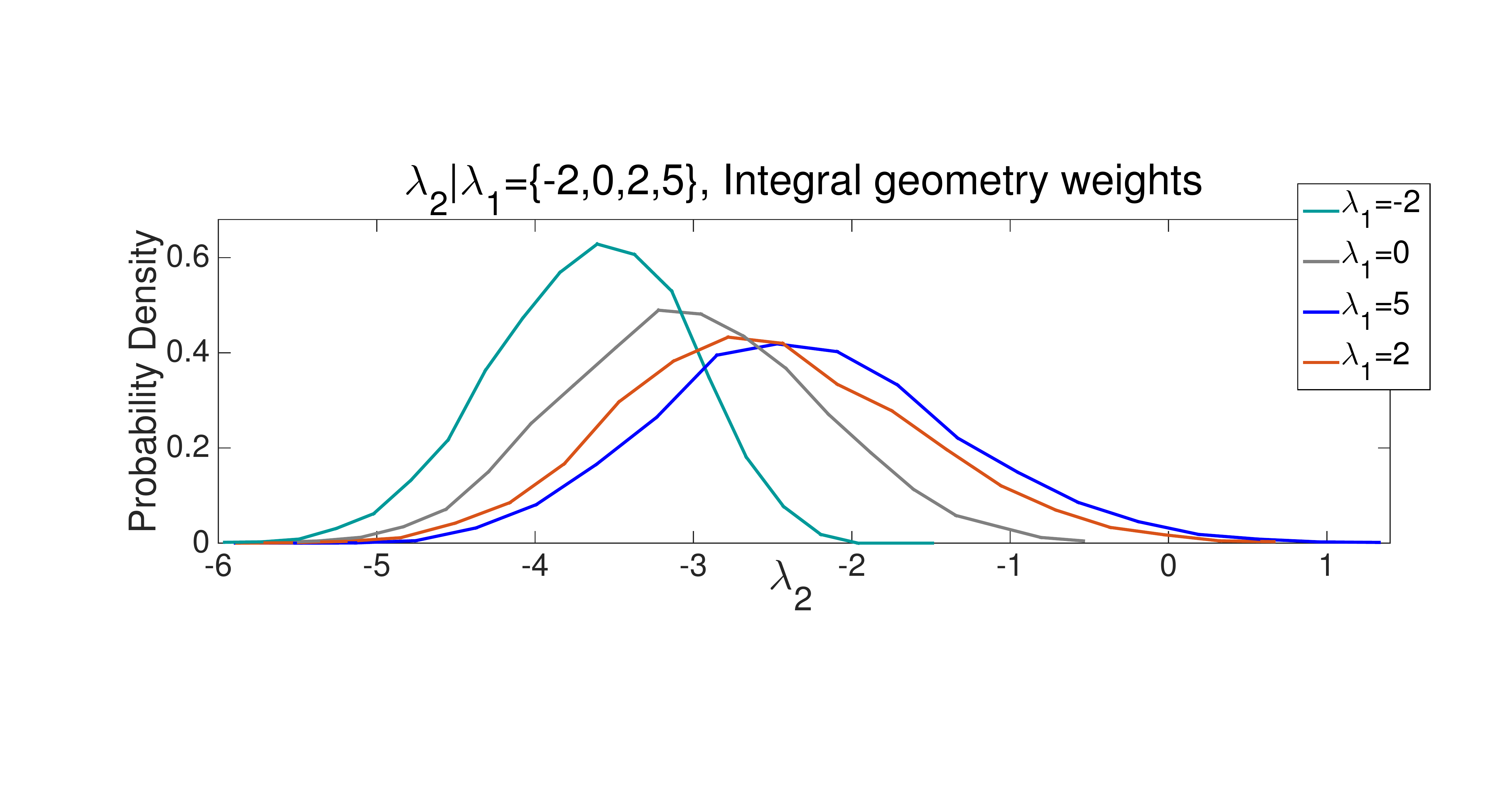}

\caption{Histograms of $\lambda_{2}|\lambda_{1}=-2, 0, 2,\textrm{ and }5$, generated using Algorithm 2.1
. A search-subspace of dimension $d=23$ was used, allowing
us to sample the rare event $\lambda_{1}=5$. In the first plot (top) we see that the rejection sampling histogram
of $\lambda_{2}|\lambda_{1}=2$ is much closer to the histogram obtained
using the integral geometry weights (blue) than the histogram obtained
with the traditional weights (red) because the red plot is much
more skewed to the right and less smooth (it takes longer to converge)
than either the blue or black plots.  If we do not constrain the solver to a random search
subspace, the histogram we get for $\lambda_{2}|\lambda_{1}=5$ (purple)
is very skewed to the right (middle plot),
implying that using random search-subspace (as a opposed to just a
random starting point) greatly helps in the mixing of our samples
towards the correct distribution. In the last plot (bottom),
the probability densities of $\lambda_{2}|\lambda_{1}$ obtained with
the integral geometry weights show that moving the largest eigenvalue
to the right has the effect of increasing the variance of the probability
density of $\lambda_{2}|\lambda_{1}$ and moving its mean to the right,
probably because the second eigenvalue feels less repulsion from the
largest eigenvalue as $\lambda_{1}\rightarrow\infty$.\label{fig:large_deviation}}
\end{figure}

\begin{rem}
As discussed in Remark \ref{rem:Deterministic_Approximation} of Section \ref{sub:Approximate-sampling-algorithm-implementation}, if we wanted to get a perfectly
accurate plot, we would still need to use a randomized solver, such as a Metropolis-Hastings solver, to randomize over the intersection points.
Since $d=23$, the volumes of the exponentially many connected submanifolds in the intersection
$S_{i+1}\cap\mathcal{M}$ would be concentrated in just a few of these
submanifolds, with the concentration being exponential in $d$, causing
the algorithm to be prohibitively slow for $d=23$ unless we use Algorithm \ref{alg:Curvature}, which uses the Chern-Gauss-Bonnet curvature reweighting of Theorem \ref{th:2} (see Section \ref{sub:Sphere-example}).
Hence, if we were to implement the randomized solver of Algorithm \ref{alg:Great-Sphere-Metropolis},
the red curve would converge extremely slowly unless we reweighted
according to Theorem \ref{th:2} (in addition to Theorem \ref{th:1}). Hence, the situation for the traditional weights
is in fact much worse in comparison to the integral geometry weights
of Theorems \ref{th:1} and \ref{th:2} than even (Figure \ref{fig:large_deviation}, middle) would suggest.
\end{rem}

\section{Acknowledgments:}

The research of Oren Mangoubi is supported by the Department of Defense (DoD) through the National Defense Science \& Engineering Graduate Fellowship (NDSEG) Program and by the MIT Mathematics department. The research of Alan Edelman is supported by NSF DMS-1312831.

We are extremely grateful to Michael Lacroix, Jiahao Chen, Natesh Pillai, Aaron Smith, and Jonathan Kelner for insightful discussions and advice.

\FloatBarrier

\bibliographystyle{plain}
\addcontentsline{toc}{section}{\refname}
\bibliography{oren}

\begin{thebibliography}{10}

\bibitem{MCMC_Application_molecular_biology}
{Anders S. Christensen, Troels E. Linnet, Mikael Borg, Wouter Boomsma, Kresten
  Lindorff-Larsen, Thomas Hamelryck and Jan H. Jense}.
\newblock Protein structure validation and refinement using amide proton
  chemical shifts derived from quantum mechanics.
\newblock {\em PLoS ONE}, 8(12):1--10, 2013.

\bibitem{Singer}
Michael~F. Atiyah and Isadore Singer.
\newblock The index of elliptic operators on compact manifolds.
\newblock {\em Bulletin of the American Mathematical Society}, 69(3):422--433,
  1963.

\bibitem{MCMC_review}
Julian Besag.
\newblock {M}arkov chain {M}onte {C}arlo for statistical inference.
\newblock Technical report, University of Washington, Department of Statistics,
  04 2001.

\bibitem{TW_code_paper}
Folkmar Bornemann.
\newblock On the numerical evaluation of distributions in random matrix theory:
  A review.
\newblock {\em Markov Processes and Related Fields}, 16(4):803--866, 2010.

\bibitem{MCMC_Application_Machine_Learning}
A.~Doucet C.~Andrieu, N. de~Freitas and M.I. Jordan.
\newblock An introduction to {MCMC} for machine learning.
\newblock {\em Machine Learning}, 50:5--43, 2003.

\bibitem{Chern}
Shiing-Shen Chern.
\newblock On the curvatura integra in a {R}iemannian manifold.
\newblock {\em Annals of Mathematics}, 46(4):674--684, 1945.

\bibitem{MCMC_Application_General_Relativity}
Neil~J. Cornish and Edward~K. Porter.
\newblock {MCMC} exploration of supermassive black hole binary inspirals.
\newblock {\em Classical Quantum Gravity}, 23(19):761--767, 2006.

\bibitem{Crofton}
Morgan~W. Crofton.
\newblock On the theory of local probability, applied to straight lines drawn
  at random in a plane; the methods used being also extended to the proof of
  certain new theorems in the integral calculus.
\newblock {\em Philosophical Transactions of the Royal Society of London},
  158:181--199, 1868.

\bibitem{Stochastic_Operators}
Alan Edelman and Brian~D. Sutton.
\newblock From random matrices to stochastic operators.
\newblock {\em Journal of Statistical Physics}, 127(6):1121--1165, 2007.

\bibitem{Gelfand}
Israel~M. Gelfand and Mikhail~M. Smirnov.
\newblock Lagrangians satisfying {C}rofton formulas, {R}adon transforms, and
  nonlocal differentials.
\newblock {\em Advances in Mathematics}, 109(2):188--227, 1994.

\bibitem{MCMC_MLE}
Charles~J. Geyer.
\newblock Markov chain {M}onte {C}arlo maximum likelihood.
\newblock In {\em Computing Science and Statistics: Proceedings of the 23rd
  Symposium on the Interface}, pages 156--163, 1991.

\bibitem{Guth}
Larry Guth.
\newblock Degree reduction and graininess for {K}akeya-type sets in
  {$\mathbb{R}^3$}.
\newblock {\em preprint on arXiv:1402.0518}, 2014.

\bibitem{Helgason}
Sigurdur Helgason.
\newblock {\em Integral Geometry and Radon Transforms}.
\newblock Springer, New York, 2010.

\bibitem{MCMC_Application_genetics}
Jody Hey and Rasmus Neilsen.
\newblock Integration within the {F}elsenstein equation for improved {M}arkov
  chain {M}onte {C}arlo methods in population genetics.
\newblock {\em Proceedings of the national academy of sciences of the United
  States of America}, 104(8):2785--2790, 2006.

\bibitem{Optimization_in_MCMC}
{J. Martin, L. Wilcox, C. Burstedde and O. Ghattas}.
\newblock A stochastic {N}ewton {MCMC} method for large-scale statistical
  inverse problems with application to seismic inversion.
\newblock {\em SIAM Journal on Scientific Computing}, 34(4):A1460--A1487, 2012.

\bibitem{Paiva}
E.~Fernandes J.C. Alvarez~Paiva.
\newblock Gelfand transforms and {C}rofton formulas.
\newblock {\em Selecta Mathematica}, 13(3):369--390, 2008.

\bibitem{Johnstone}
I.M. Johnstone.
\newblock On the distribution of the largest eigenvalue in principal components
  analysis.
\newblock {\em Annals of Statistics}, 29:295--327, 2001.

\bibitem{RRV}
{Jos{\'e} Ram{\'\i}rez, Brian Rider and B{\'a}lint Vir{\'a}g}.
\newblock Beta ensembles, stochastic {A}iry spectrum, and a diffusion.
\newblock {\em Journal of the American Mathematical Society}, 24(4):919--944,
  2011.

\bibitem{Graphical_Models_Book}
Daphne Koller and Nir Friedman.
\newblock {\em Probabilistic Graphical Models}.
\newblock MIT Press, Cambridge, 2009.

\bibitem{Concentration_of_measure_phenomenon}
Michel Ledoux.
\newblock The concentration of measure phenomenon.
\newblock In {Peter Landweber, Michael Loss, Tudor Ratiu and J.T. Stafford},
  editor, {\em Mathematical Surveys and Monographs}, volume~89. American
  Mathematical Society, 2001.

\bibitem{Tubular_Neighborhoods}
Martin Lotz.
\newblock On the volume of tubular neighborhoods of real algebraic varieties.
\newblock {\em arXiv:1210.3742}, 2012.

\bibitem{Oren_Concentration_of_Measure}
Oren Mangoubi and Alan Edelman.
\newblock Concentration of kinematic measure.
\newblock {\em In Preparation}, 2015.

\bibitem{Milman}
Vitali~D. Milman.
\newblock A new proof of {A.} {D}voretzky's theorem on cross-sections of convex
  bodies.
\newblock {\em Funkcional. Anal. i Prilozhen}, 5(4):28--37, 1971.

\bibitem{BayesianBook}
{Ming-Hui Chen, Qi-Man Shao and Joseph G. Ibrahim}.
\newblock {\em Monte Carlo methods in Bayesian computation}.
\newblock Springer, 2000.

\bibitem{TW_lookup_table_paper}
Boaz Nadler.
\newblock On the distribution of the ratio of the largest eigenvalue to the
  trace of a {W}ishart matrix.
\newblock {\em Journal of Multivariate Analysis}, 102, 2011.

\bibitem{Handbook}
Radford~M. Neal.
\newblock {MCMC} using {H}amiltonian dynamics.
\newblock In {Steve Brooks, Andrew Gelman, Galin L. Jones and Xiao-Li Meng},
  editor, {\em Handbook of {M}arkov Chain {M}onte {C}arlo}. CRC Press, 2011.

\bibitem{Total_Absolute_Curvature_2}
Stepan~Yu. Orevkov.
\newblock Sharpness of {R}isler's upper bound for the total curvature of an
  affine real algebraic hypersurface.
\newblock {\em Russian Mathematical Surveys}, 62:393--394, 2007.

\bibitem{Manifold_MCMC_Diaconis}
{Persi Diaconis, Susan Holmes and Mehrdad Shahshahani}.
\newblock Sampling from a manifold.
\newblock In {\em Advances in Modern Statistical Theory and Applications: A
  Festschrift in Honor of Morris L. Eaton}, pages 102--125. Institute of
  Mathematical Statistics, 2013.

\bibitem{MCMC_Application_Linguistics}
{R.H. Baayen, D.J. Davidson and D.M. Bates}.
\newblock Mixed-effects modeling with crossed random effects for subjects and
  items.
\newblock {\em Journal of Memory and Language}, 59:390--412, 2008.

\bibitem{Total_Absolute_Curvature_1}
Jean-Jaques Risler.
\newblock On the curvature of the real milnor fiber.
\newblock {\em Bull. London Math. Soc.}, 35(4):445--454, 2003.

\bibitem{Santalo}
Luis~A. Santalo.
\newblock Integral geometry and geometric probability.
\newblock In Gian-Carlo Rota, editor, {\em Encyclopedia of Mathematics and its
  Applications}, volume~1. Addison-Wesley Publishing Company, 1976.

\bibitem{Stochastic_Geometry_Book}
Rolf Schneider and Wolfgang Weil.
\newblock {\em Stochastic and Integral Geometry}.
\newblock Springer, 2008.

\bibitem{SpivakIII}
Michael Spivak.
\newblock {\em A comprehendsive introduction to differential geometry, Vol.
  III}.
\newblock Publish or Perish, Inc., Berkeley, 1999.

\bibitem{SpivakV}
Michael Spivak.
\newblock {\em A comprehendsive introduction to differential geometry, Vol. V}.
\newblock Publish or Perish, Inc., Berkeley, 1999.

\bibitem{Sutton_Thesis}
Brian~D. Sutton.
\newblock {\em {The stochastic operator approach to random matrix theory}}.
\newblock PhD thesis, Massachusetts Institute of Technology, 2011.

\bibitem{Gauss_Bonnet_for_Hypersurfaces}
Mihai Tibar and Dirk Siersma.
\newblock Curvature and {G}auss-{B}onnet defect of global affine hypersurfaces.
\newblock {\em Bulletin des Sciences Mathematiques}, 130(2):110--122, 2006.

\bibitem{MCMC_manifold_application_statistical_mechanics}
{Tony Lelièvre, Mathias Rousset, and Gabriel Stoltz}.
\newblock {\em Free energy computations: A Mathematical Perspective}.
\newblock Imperial College Press, 2010.

\bibitem{Trefethen_book}
L.~N. Trefethen and D.~Bau III.
\newblock {\em Numerical Linear Algebra}.
\newblock SIAM, 1997.

\bibitem{MCMC_problems2}
{Yongtao Guan and Stephen M. Krone}.
\newblock Small-world {MCMC} and convergence to multi-modal distributions: from
  slow mixing to fast mixing.
\newblock {\em The Annals of Applied Probability}, 17(1):284--304, 2007.

\bibitem{Gauss_Bonnet_for_Hypersurfaces2}
Chenchang Zhu.
\newblock The {G}auss-{B}onnet theorem and its applications.
\newblock {\em http://math.berkeley.edu/~alanw/240papers00/zhu.pdf}, 2004.

\end{thebibliography}

\end{document}